\renewcommand{\vec}[1]{\mathbf{#1}}
\newcommand{\be}{\begin{equation}}
\newcommand{\ee}{\end{equation}}
\newcommand{\ba}{\begin{array}}
\newcommand{\ea}{\end{array}}
\newcommand{\bea}{\begin{eqnarray}}
\newcommand{\eea}{\end{eqnarray}}
\newcommand{\beas}{\begin{eqnarray*}}
\newcommand{\eeas}{\end{eqnarray*}}
\newtheorem{prop}{Proposition}[section]
\numberwithin{equation}{section}
\begin{document}

\begin{frontmatter}

\title{A parametric finite element method for solid-state dewetting problems\\
with anisotropic surface energies}

\author[1]{Weizhu Bao}
\address[1]{Department of Mathematics, National University of
Singapore, Singapore, 119076}
\ead{matbaowz@nus.edu.sg}

\author[2,3]{Wei Jiang\corref{4}}
\address[2]{School of Mathematics and Statistics,
Wuhan University, Wuhan, 430072, China}
\address[3]{Computational Science Hubei Key Laboratory,
Wuhan University, Wuhan, 430072, China}
\ead{jiangwei1007@whu.edu.cn}
\cortext[4]{Corresponding author.}

\author[1]{Yan Wang}
\ead{a0086260@nus.edu.sg}

\author[1]{Quan Zhao}
\ead{a0109999@nus.edu.sg}


\begin{abstract}

We propose an efficient and accurate parametric finite element method (PFEM) for solving sharp-interface continuum models for solid-state dewetting of thin films with anisotropic surface energies. The governing equations of the sharp-interface models belong to a new type of high-order (4th- or 6th-order) geometric evolution partial differential equations about open curve/surface interface tracking problems which include anisotropic surface diffusion flow and contact line migration. Compared to the traditional methods (e.g., marker-particle methods), the proposed PFEM not only has very good accuracy, but also poses very mild restrictions on the numerical stability, and thus it has significant advantages for solving this type of open curve evolution problems with applications in the simulation of
solid-state dewetting. Extensive numerical results are reported to demonstrate
the accuracy and high efficiency of the proposed PFEM.

\end{abstract}



\begin{keyword}
Solid-state dewetting, surface diffusion, contact line migration, anisotropic surface energy,
parametric finite element method
\end{keyword}

\end{frontmatter}

\section{Introduction}

Solid-state dewetting of thin films on substrates is a ubiquitous phenomenon in thin film technology, which has been observed in a wide range of systems~\cite{Thompson12,Jiran90,Jiran92,Ye10a,Ye10b,Ye11a,Ye11b,Leroy12,Rabkin14} and is of considerable technological interest (e.g., see the review paper by C.V. Thompson~\cite{Thompson12}). Nowadays, the solid-state dewetting has been widely used in making arrays of nanoscale particles for optical and sensor devices~\cite{Rath07,Armelao06} and catalyzing the growth of carbon nanotubes~\cite{Randolph07} and semiconductor nanowires~\cite{Schmidt09}. It has attracted ever
more attention because of its potential technology applications and great interests in understanding its fundamental physics.

The dewetting of thin solid films deposited on substrates is very similar to the dewetting phenomena of
liquid films on substrates~\cite{deGennes85,Ren10,Xu11}. For example, for both the phenomena, dewetting
and pinch-off may happen when
an initially continuous and long thin film is bonded to a rigid substrate and eventually
an array of isolated particles will form. However, the main difference comes from
their different ways in mass transport, while
the solid-state dewetting occurs through surface diffusion instead of fluid dynamics in
liquid dewetting.
The solid-state dewetting can be modeled as a type of interface-tracking problem for the evolution via surface diffusion and contact line migration (or moving contact line).
More specifically, the contact line is a triple line (where the film, substrate,
and vapor phases meet) that migrates as the curve/surface evolves. Compared to the
widely studied problems about closed curve/surface evolution under surface diffusion flow (shown in Fig.~\ref{fig:curve2d}(a)), to some extent, the solid-state dewetting problem can be regarded as a type of open curve/surface evolution problems under surface diffusion flow together with contact line migration (shown in Fig.~\ref{fig:curve2d}(b)) and this type of geometric evolution problems has posed a considerable challenge to researchers in materials science, applied mathematics and scientific computing~\cite{Srolovitz86,Wong00,Du10,Dornel06,Jiang12,Jiang15a,Jiang15b,Jiang15c,Dziwnik15}.

\begin{figure}[htpb]
\centering
\includegraphics[width=12cm,angle=0]{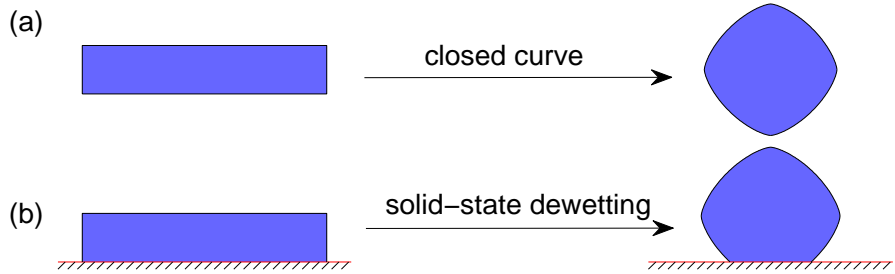}
\caption{A schematic illustration of the curve evolution from initial shapes to their equilibrium shapes under surface diffusion flow: (a)~closed curve evolution; (b)~open curve evolution with contact line migration (describing solid-state dewetting).}
\label{fig:curve2d}
\end{figure}

The isotropic surface diffusion equation was first proposed by W.W.~Mullins in 1957 for describing the development of the surface grooving at the grain boundaries~\cite{Mullins57}. Since then, lots of researchers have generalized the equation to include the anisotropic effects of crystalline films~\cite{Cahn94,Gurtin02,Li09,Torabi10}, and used them to study different interesting phenomena in materials science
and applied mathematics. The surface diffusion equation in 2D for a closed curve evolution
in a sharp-interface model can be stated as follows:
\begin{equation}
\partial_{t}\vec{X}=\partial_{ss}\mu \; \vec{n},
\label{eqn:motion}
\end{equation}
where $\vec{X}:=\vec{X}(s,t)=(x(s,t),y(s,t))$ represents the moving interface, $s$ is the arc length or distance along the interface and $t$ is the time, $\vec n$ is the interface outer unit normal direction, and the chemical potential $\mu:=\mu(s,t)$ can be defined as below for the weakly or strongly anisotropic cases:
\begin{subequations}
\begin{numcases}{}
\mu=\big[\gamma(\theta)+\gamma^{\prime\prime}(\theta)\big]\kappa,
&\text{{\it {Case I: Weak Anisotropy}}}, \label{eqn:weakly}\\
\mu=\left[\gamma(\theta)+\gamma^{\prime\prime}
(\theta)\right]\kappa-\varepsilon^{2}\left(\frac{\kappa^{3}}{2}+\partial_{ss}\kappa\right), &
\text{{\it{Case II: Strong Anisotropy}}},  \label{eqn:strongly}
\end{numcases}
\end{subequations}
with $0<\varepsilon\ll1$ a small regularization parameter,
 $\theta \in [-\pi, \pi]$ the local orientation (i.e., the angle between
the interface outer normal and $y$-axis or between the interface tangent vector and $x$-axis)
(cf. Fig.~\ref{fig:model}),
and $\kappa:=\kappa(s,t)$ the curvature of the interface curve, which is defined as
\begin{equation}
\kappa=-\left(\partial_{ss}\vec{X}\right)\cdot\vec{n}.
\label{eqn:kappa}
\end{equation}

Here $\gamma:=\gamma(\theta)$ represents the (dimensionless) surface energy (density)
(e.g., scaled by the dimensionless unit of the surface energy $\gamma_0>0$, which is usually
taken as $\frac{1}{2\pi}\int_{-\pi}^\pi \gamma(\theta)d\theta$)~\cite{Jiang15a,Jiang15b},
and it is usually a periodic positive function with period $2\pi$.
If $\gamma(\theta)\equiv 1$ which is independent of $\theta$,
then the surface energy is isotropic; otherwise, it is anisotropic. If
$\gamma(\theta)\in C^2[-\pi,\pi]$, it is usually classified as smooth; otherwise, it is
non-smooth and/or ``cusped''. For a smooth surface energy $\gamma(\theta)$,
if the surface stiffness $\widetilde\gamma(\theta):=\gamma(\theta)+\gamma^{\prime\prime}(\theta)>0$ for all
$\theta\in[-\pi,\pi]$, it is called as weakly anisotropic; otherwise,
if there exist some orientations $\theta\in[-\pi,\pi]$ such that
$\widetilde\gamma(\theta)=\gamma(\theta)+\gamma^{\prime\prime}(\theta)<0$, then it is strongly anisotropic.
In many applications in materials science, the following (dimensionless) $k$-fold
smooth surface energy is widely used
\begin{equation}
\gamma(\theta)=1+\beta\cos[k(\theta+\phi)], \qquad \theta \in [-\pi,\pi],
\label{eqn:mfold}
\end{equation}
where $\beta\ge0$ controls the degree of the anisotropy,
$k$ is the order of the rotational symmetry (usually taken as $k=2,3,4,6$ for crystalline materials)
and $\phi\in[0,\pi]$ represents a phase shift angle describing a rotation of the crystallographic axes of the thin film with respect to the substrate plane. For this surface energy, when
$\beta=0$, it is isotropic; when $0<\beta<\frac{1}{k^2-1}$, it is weakly anisotropic;
and when $\beta>\frac{1}{k^2-1}$, it is strongly anisotropic.

For a closed curve $\Gamma$ in 2D, when the surface energy is isotropic/weakly anisotropic,
by defining the total free energy $W=\int_\Gamma \gamma(\theta)ds$ and calculating
its variation with respect to $\Gamma$, one can obtain the chemical potential (\ref{eqn:weakly})~\cite{Cahn94}.
Then the governing equations \eqref{eqn:motion}, \eqref{eqn:weakly} and \eqref{eqn:kappa}
control how a closed curve evolves under surface diffusion flow with isotropic/weakly anisotropic
surface energies. On the other hand, in the strongly anisotropic surface energy case,
the problem of the governing equations \eqref{eqn:motion}, (\ref{eqn:weakly})
and \eqref{eqn:kappa} becomes mathematically ill-posed. In this case,
sharp corners may appear on the crystalline thin film during temporal evolution
and a range of crystallographic
orientations is missing from the final equilibrium thin film shape~\cite{Gurtin02,Spencer04}.
In this scenario, in order to make the problem well-posed, one can regularize
the total free energy $W$ by adding a regularized Willmore energy as
$W_\varepsilon=\int_\Gamma \gamma(\theta)ds+\frac{\varepsilon^2}{2}\int_\Gamma \kappa^2ds$
with $0<\varepsilon\ll1$ a regularization parameter~\cite{Gurtin02,Li09}.
Again, by calculating its variation with respect to $\Gamma$, one can obtain the
chemical potential (\ref{eqn:strongly}) \cite{Gurtin02,Li09,Torabi10,Spencer04}.
Then the governing equations \eqref{eqn:motion}, (\ref{eqn:strongly}) and \eqref{eqn:kappa}
control how a closed curve evolves under surface diffusion flow with strongly anisotropic
surface energies.  Based on these sharp-interface models,
different numerical methods have been proposed in the literatures
for simulating the evolution of a closed curve under surface diffusion,
such as the marker-particle methods~\cite{Leung11,Hon14},
the finite element method based on a graph representation of the surface
\cite{Nochetto04,Elliott05a,Elliott05b} and
the parametric finite element method (PFEM) \cite{Nochetto05,Barrett07a,Barrett07b,Barrett08IMA,Barrett10Euro}.
On the other hand, for a non-smooth and/or ``cusped'' surface energy, one usually needs
to first smooth the surface energy $\gamma(\theta)$, then apply the above governing equations,
and we will discuss this case in Section 5. From now on, we always assume that the surface
energy is smooth, i.e. $\gamma\in C^2[-\pi,\pi]$.

For the solid-state dewetting problem, i.e., evolution of an open curve/surface
under surface diffusion and contact line migration, different mathematical models and corresponding numerical
algorithms have also been developed in the literatures~\cite{Srolovitz86,Wong00,Du10,Dornel06,Jiang12,
Jiang15a,Jiang15b,Jiang15c}. According to their designing ideas, these methods can be mainly categorized
into two different classes: interface-tracking methods and interface-capturing methods.
In the interface-tracking methods, the moving interface is simulated by the sharp-interface model.
Numerically, it is explicitly represented by the computational mesh,
and the mesh is updated when the interface evolves.
Along this approach, D.J. Srolovitz and S.A. Safran
proposed a sharp-interface model for the solid-state dewetting of thin films
with isotropic surface energies~\cite{Srolovitz86}. Based on this model,
the marker-particle method was presented for the simulation
of the solid-state dewetting~\cite{Wong00,Du10}.
Recently, sharp-interface models were obtained rigorously via the energy variational method
for the temporal evolution of open curves in 2D for the solid-state dewetting of thin films
with weakly and strongly anisotropic surface energies.
For details, we refer to \cite{Jiang15a,Jiang15b,Jiang15c} and references therein.
On the other hand, in the interface-capturing methods,
an artificial scalar function, e.g., the phase function,
needs to be introduced and the interface is represented
(or ``captured'') by the zero-contour of the phase function.
The most common representatives of these interface-capturing methods are
the level set method and phase field method. For example, these interface-capturing
methods to compute surface diffusion have been studied in the literature for level set
methods~\cite{Chopp99,Smereka03,Kolahdouz13} and phase field methods~\cite{Barrett99a,Barrett99b,Barrett01,Barrett13ZAMM,Wise07}.
However, for solid-state dewetting problems, in addition to the surface diffusion, the main difficulty for these interface-capturing methods lies in how to deal with the complex boundary conditions which result from contact line migrations. Recently, a phase field model by using
the Cahn-Hilliard equation with degenerate mobility and nonlinear
boundary conditions along the substrate has been proposed
for simulating the solid-state dewetting with isotropic surface energy~\cite{Jiang12},
and this method was recently extended to the weakly anisotropic
surface energy case~\cite{Dziwnik15}.
For comparisons between the interface-tracking and interface-capturing methods, each one has its own advantages and disadvantages. The interface-tracking methods via the sharp-interface model can always give
the correct physics about surface diffusion together with contact line migration for
the solid-state dewetting, and it is computationally efficient since it is one dimension less in
space compared to the phase field model.
But it is very tedious and troublesome to handle topological changes, e.g., pinch-off events.
On the other hand, the interface-capturing methods via the phase field model can in general handle automatically topological changes and complicated geometries, but the sharp-interface limits of these phase field models are still unclear~\cite{Suli15}, and efficient and accurate simulations for surface diffusion and solid-state dewetting problems by using these models are still not well developed.

The main objectives of this paper are as follows: (1) to derive the weak (or variational) formulation
of the sharp-interface models for the solid-state dewetting when the surface energy is isotropic/weakly or
strongly anisotropic,
(2) to develop a
parametric finite element method (PFEM) for simulating the solid-state dewetting of thin films with anisotropic surface energies via the sharp-interface continuum models,
(3) to demonstrate the efficiency, accuracy and flexibility of the proposed PFEM for simulating
solid-state dewetting with different surface energies,
(4) to show some interesting phenomena of the temporal evolution of open curves
under surface diffusion and contact line migration arising from
the solid-state dewetting, such as facets, pinch-off, wavy structures, multiple equilibrium shapes, etc., and
(5) to extend the sharp-interface models and the PFEM to the case
where the surface energy is non-smooth and/or ``cusped''.

The rest of the paper is organized as follows. In Section 2,
we briefly review the sharp-interface continuum model of the
solid-state dewetting problems with isotropic/weakly anisotropic
surface energies, present its variational formulation and the corresponding
PFEM, and test the convergence order.
Similar results  for the strongly anisotropic case are shown in Section 3.
Extensive numerical results are reported in Section 4
to demonstrate the accuracy and high performance of the proposed PFEM and to show some interesting phenomena
in the solid-state dewetting including the morphology evolution
of small and large island films.
In Section 5, we extend our approach for the case of
non-smooth and/or cusped surface energies, and test the convergence of the models
with respect to the small smoothing parameter. Finally, we draw some conclusions in Section 6.

\section{For isotropic/weakly anisotropic surface energies}

In this section, we first review the sharp-interface model obtained recently
by us for the solid-state dewetting with isotropic/weakly anisotropic surface energies \cite{Jiang15a},
derive its variational formulation and show mass (area) conservation and energy dissipation within
the weak formulation, present the PFEM and test numerically its convergence order.

\begin{figure}[H]
\centering
\includegraphics[width=12cm,angle=0]{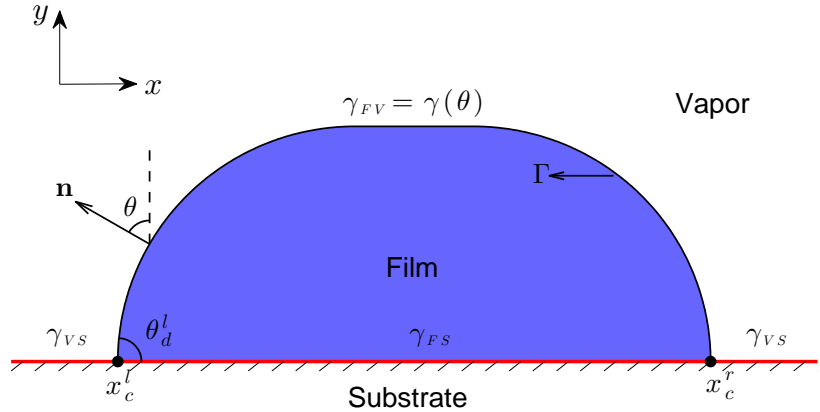}
\caption{A schematic illustration of a solid thin film on a flat and rigid substrate in two dimensions.
As the film morphology evolves, the contact points $x_c^l$ and $x_c^r$ move along the substrate.}
\label{fig:model}
\end{figure}

\subsection{The sharp-interface model}

For an open curve $\Gamma:=\Gamma(t)$
in 2D with two triple (or contact) points $x_c^l:=x_c^l(t)$ and $x_c^r:=x_c^r(t)$ moving along the substrate
(cf. Fig.~\ref{fig:model}) and isotropic/weakly anisotropic surface energy $\gamma(\theta)$, i.e.,
$\gamma\in C^2[-\pi,\pi]$ satisfying $\gamma(\theta)+\gamma^{\prime\prime}(\theta)>0$ for all
$\theta\in[-\pi,\pi]$, one can define  the total interfacial energy
$W=\int_{\Gamma(t)} \gamma(\theta)ds-\sigma(x_c^r-x_c^l) $ where $\sigma:=\frac{\gamma_{\rm VS}-\gamma_{\rm FS}}{\gamma_0}$ represents a dimensionless material parameter and $\gamma_{\rm VS}$ and $\gamma_{\rm FS}$ denote two material constants
for the vapor-substrate and film-substrate surface energies, respectively. By
calculating variations with respect to
$\Gamma$ and the two contact points $x_c^l$ and $x_c^r$, respectively,
the following sharp-interface model has been obtained for the temporal evolution
of the open curve $\Gamma$ with applications in the solid-state dewetting of thin films
with isotropic/weakly anisotropic surface energies~\cite{Jiang15a,Jiang15c}:
\begin{eqnarray}\label{eqn:weak1}
&&\partial_{t}\vec{X}=\partial_{ss}\mu \; \vec{n}, \qquad 0<s<L(t), \qquad t>0, \\
&&\mu=\big[\gamma(\theta)+\gamma^{\prime\prime}(\theta)\big]\kappa, \qquad
\kappa=-\left(\partial_{ss}\vec{X}\right)\cdot\vec{n};
\label{eqn:weak2}
\end{eqnarray}
where $L:=L(t)$ represents the total length of the moving interface at time $t$,
together with the following boundary conditions:
\begin{itemize}
\item[(i)] contact point condition
\begin{equation}\label{eqn:weakBC1}
y(0,t)=0, \qquad y(L,t)=0, \qquad t\ge0,
\end{equation}
\item[(ii)] relaxed contact angle condition
\begin{eqnarray}\label{eqn:weakBC2a}
\frac{d x_c^l}{d t}=\eta f(\theta_d^l;\sigma),\qquad
\frac{d x_c^r}{d t}=-\eta f(\theta_d^r;\sigma),\qquad t\ge0,
\end{eqnarray}
\item[(iii)] zero-mass flux condition
\begin{equation}
\partial_s \mu(0,t)=0, \qquad \partial_s \mu(L,t)=0,\qquad t\ge0;
\label{eqn:weakBC3}
\end{equation}
\end{itemize}
where $\theta_d^l:=\theta_d^l(t)$ and $\theta_d^r:=\theta_d^r(t)$ are the (dynamic) contact angles
at the left and right contact points, respectively, $0<\eta<\infty$ denotes the contact line mobility,
and $f(\theta;\sigma)$ is defined as \cite{Jiang15a}
\begin{equation}\label{aYoung}
f(\theta;\sigma)=\gamma(\theta)\cos\theta-\gamma\,'(\theta)
\sin\theta-\sigma,\qquad \theta\in[-\pi,\pi],
\end{equation}
such that $f(\theta;\sigma)=0$ is the anisotropic Young equation \cite{Jiang15a}.
For the isotropic surface energy case (i.e., $\gamma(\theta)\equiv 1$), the above anisotropic Young equation
collapses to the well-known (isotropic) Young equation $\cos\theta =\sigma=\frac{\gamma_{\scriptscriptstyle {VS}}-
\gamma_{\scriptscriptstyle {FS}}}{\gamma_0}$ \cite{deGennes85,Ren10,Xu11}, which has been widely used to determine the contact angle at the triple point in the equilibrium of liquid wetting/dewetting with isotropic surface energy.
In fact, the zero-mass flux condition~\eqref{eqn:weakBC3} ensures that the total mass of the thin film is conserved during the evolution, implying that there
is no mass flux at the contact points. For the study of dynamics, the initial condition is given as
\begin{equation}\label{init}
\vec{X}(s,0):=\vec{X}_0(s)=(x(s,0),y(s,0))=(x_0(s),y_0(s)), \qquad 0\le s\le L_0:=L(0),
\end{equation}
satisfying $y_0(0)=y_0(L_0)=0$ and $x_0(0)<x_0(L_0)$.

By defining the total mass of the thin film $A(t)$ (or the enclosed area by the moving interface $\Gamma(t)$ and the substrate) and the total (interfacial)  energy of the system $W(t)$ as \cite{Jiang15a}:
\begin{equation}\label{eqn:totalmass}
A(t)=\int_0^{L(t)}y(s,t)\partial_s x(s,t)\;ds,\qquad
W(t)=\int_0^{L(t)}\gamma(\theta(s,t))\;ds-\sigma[x_c^r(t)-x_c^l(t)],\qquad t\ge0,
\end{equation}
it has been shown that the mass is conserved and total energy is decreasing under the dynamics of the above sharp-interface model (\ref{eqn:weak1})-(\ref{eqn:weakBC3}) with (\ref{init}) \cite{Jiang15a,Jiang15c}.

\subsection{Variational formulation}
In order to present the variational formulation of the problem \eqref{eqn:weak1}-\eqref{eqn:weakBC3}, we
introduce a new time-independent spatial variable $\rho\in I:=[0,1]$
and use it to parameterize the curve $\Gamma(t)$ instead of the arc length $s$.
More precisely, assume that $\Gamma(t)$ is a family of open curves in the plane, where $t\in[0,T]$ with
$T>0$ a fixed time, then we can parameterize the curves as $\Gamma(t)=\vec X(\rho, t): I\times [0,T]\rightarrow \mathbb{R}^2$. It should be noted that the relationship between the variable $\rho$ and the arc length $s$ can be given as $s(\rho,t)=\int_0^\rho |\partial_q\vec{X}|\;dq$, which immediately implies that  $\partial_\rho s=|\partial_\rho\vec{X}|$.

Introduce the functional space with respect to the
evolution curve $\Gamma(t)$ as
\begin{equation}
L^2(I)=\{u: I\rightarrow \mathbb{R}, \;\text{and} \int_{\Gamma(t)}|u(s)|^2 ds
=\int_I |u(s(\rho,t))|^2 \partial_\rho s\, d\rho <+\infty \},
\end{equation}
equipped with the $L^2$ inner product
\be
\big<u,v\big>_{\Gamma}:=\int_{\Gamma(t)}u(s)v(s)\,ds=
\int_{I}u(s(\rho,t))v(s(\rho,t))|\partial_\rho\vec{X}|\,d\rho,\qquad \forall\;u,v\in L^2(I),
\ee
for any scalar (or vector) functions. Note that the defined space $L^2(I)$ can
be viewed as the conventional $L^2$ space but equipped with different weighted inner
product associated with the moving curve $\Gamma(t)$.
Define the functional space for the solution of the solid-state dewetting problem as
\begin{equation}
H_{a,b}^1(I)=\{u \in H^{1}(I): u(0)=a, u(1)=b\},
\end{equation}
where $a$ and $b$ are two given constants which are related to
the two moving contact points at time $t$, respectively, and
$H^1(I)=\{u \in L^2(I): \partial_\rho u \in L^2(I)\}$ is
the standard Sobolev space with the derivative taken in the distributional or weak sense.
For the simplicity of notations, we denote the functional space $H^1_0(I):=H^1_{0,0}(I)$.

By using the integration by parts, we can obtain the variational problem
for the solid-state dewetting problem with isotropic/weakly
anisotropic surface energies:
given an initial curve $\Gamma(0)=\vec X(\rho,0)=\vec X_0(s)$ with $s=L_0\rho$
for $\rho \in I$, for any time $t \in (0,T]$, find the evolution curves
$\Gamma(t)=\vec X(\rho,t)\in H_{a,b}^1(I)\times H_{0}^{1}(I)$ with the
$x$-coordinate positions of moving contact points $a=x_c^l(t)\leq x_c^r(t)=b$,
the chemical potential $\mu(\rho,t)\in H^1(I)$, and the curvature $\kappa(\rho,t)\in H^1(I)$ such that
\begin{eqnarray}
&&\big<\partial_{t}\vec{X},~\varphi\vec{n}\big>_{\Gamma}+\big<\partial_{s}\mu,~\partial_{s}\varphi\big>
_{\Gamma}=0, \qquad \forall\; \varphi\in H^{1}(I), \label{eqn:weq1}\\[1.0em]
&&\big<\mu,~\psi\big>_{\Gamma}-\big<\big[\gamma(\theta)+\gamma\,''(\theta)\big]\kappa,~\psi\big>_{\Gamma}=0, \qquad\forall\; \psi\in H^1(I), \label{eqn:weq2}\\[1.0em]
&&\big<\kappa\vec{n},~\boldsymbol{\omega}\big>_{\Gamma}-\big<\partial_{s}\vec{X},~\partial_{s}\boldsymbol{\omega}
\big>_{\Gamma}=0, \qquad\forall\; \boldsymbol{\omega}\in H_{0}^{1}(I)\times H_{0}^{1}(I), \label{eqn:weq3}
\end{eqnarray}
coupled with that the positions of the moving contact points, i.e., $x_c^l(t)$ and $x_c^r(t)$,
are updated by the boundary condition Eq.~\eqref{eqn:weakBC2a}.
Here the $L^2$ inner product $\big<\cdot,\cdot\big>_{\Gamma}$
and the differential operator $\partial_s$ depend on the curve $\Gamma:=\Gamma(t)$.
In fact, Eq. (\ref{eqn:weq1}) is obtained by re-formulating (\ref{eqn:weak1}) as
$\vec{n}\cdot \partial_{t}\vec{X}=\partial_{ss}\mu$, multiplying the test function
$\varphi$, integrating over $\Gamma$, integration by parts and noticing
the boundary condition (\ref{eqn:weakBC3}). Similarly, Eq. (\ref{eqn:weq2}) is derived
from the left equation in (\ref{eqn:weak2}) by multiplying the test function
$\psi$, and Eq. (\ref{eqn:weq3}) is obtained
from the right equation in (\ref{eqn:weak2}) by re-formulating it as
$\kappa\vec{n}=-\partial_{ss}\vec{X}$ and dot-product the  test function
$\boldsymbol{\omega}$.
For the isotropic case, i.e., $\gamma(\theta)\equiv 1$, it is easy to see that
$\mu=\kappa$ from Eq.~\eqref{eqn:weq2}, thus the above variational problem can be simplified by dropping
Eq.~\eqref{eqn:weq2} and setting $\mu =\kappa$ in Eq.~\eqref{eqn:weq1}. We remark here that, for isotropic/weakly
anisotropic case, a similar variational problem was presented by J.W. Barrett et al.
\cite{Barrett08IMA,Barrett10Euro} based on the anisotropic curvature
$\kappa_\gamma=(\gamma +\gamma^{\prime\prime})\kappa$ and a PFEM was proposed for discretizing
the corresponding variational problem with applications to thermal grooving and sintering in materials science.
An advantage of their method is that they can prove a stability bound for a special class of anisotropic
surface energy density. However, it is not clear on how to extend their approach
for handling with the strongly anisotropic case. On the other hand, our variational problem
\eqref{eqn:weq1}-\eqref{eqn:weq3} can be easily extended to the strongly anisotropic case
(see details in the next section).

For the simplicity of notations, we use the subscripts $s, \rho, t$ in the following
to denote partial derivatives with respect to the arc length $s$, the newly
introduced fixed domain variable $\rho$ and the time $t$, respectively.
For the weak solution of the variational problem \eqref{eqn:weq1}-\eqref{eqn:weq3},
we can show that it has the property of mass conservation and energy dissipation.

\begin{prop}[Mass conservation] \label{Mass2-1}
Assume that $(\vec X(\rho,t), \mu(\rho,t),\kappa(\rho,t))$ be a weak solution
of the variational problem \eqref{eqn:weq1}-\eqref{eqn:weq3}, then the total mass of the thin
film is conserved during the evolution, i.e.,
\begin{equation}\label{eqn:totalmassw}
A(t)\equiv A(0)=\int_0^{L_0}y_0(s)\partial_s x_0(s)\;ds, \qquad
t\ge0.
\end{equation}
\end{prop}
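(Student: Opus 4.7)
The plan is to differentiate $A(t)$ with respect to $t$ and show the derivative vanishes, using the weak equation \eqref{eqn:weq1} with a cleverly chosen test function.

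First I would rewrite $A(t)$ in terms of the fixed parametrization variable $\rho \in I$, which removes the time-dependent limits of integration. Using $ds = |\partial_\rho \vec X|\, d\rho$ and $\partial_s x \cdot |\partial_\rho \vec X| = \partial_\rho x$, we get
\begin{equation*}
A(t) = \int_I y(\rho,t)\, \partial_\rho x(\rho,t)\, d\rho.
\end{equation*}
Differentiating under the integral sign (now legitimate since $I$ is fixed), and swapping $\partial_t$ and $\partial_\rho$ on the second factor, yields
\begin{equation*}
\frac{dA}{dt} = \int_I \partial_t y\,\partial_\rho x\, d\rho + \int_I y\,\partial_\rho(\partial_t x)\, d\rho.
\end{equation*}

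Next I would integrate the second integral by parts in $\rho$. The boundary term $[y\,\partial_t x]_{\rho=0}^{\rho=1}$ vanishes exactly because of the contact-point condition \eqref{eqn:weakBC1}: $y(0,t)=y(1,t)=0$ for all $t$. This leaves
\begin{equation*}
\frac{dA}{dt} = \int_I \bigl(\partial_t y\,\partial_\rho x - \partial_t x\,\partial_\rho y\bigr)\, d\rho.
\end{equation*}
Writing $\partial_\rho \vec X = |\partial_\rho \vec X|\,\vec\tau$ with $\vec\tau=(\tau_1,\tau_2)$ the unit tangent and using the convention $\vec n = (-\tau_2,\tau_1)$ for the outer unit normal (consistent with Fig.~\ref{fig:model}), the integrand becomes $|\partial_\rho \vec X|\,(\vec n\cdot \partial_t \vec X)$. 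Hence
\begin{equation*}
\frac{dA}{dt} = \int_\Gamma \vec n\cdot \partial_t \vec X\, ds = \big<\partial_t \vec X,\,\vec n\big>_\Gamma.
\end{equation*}

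Finally, I would insert the constant test function $\varphi\equiv 1\in H^1(I)$ into the weak equation \eqref{eqn:weq1}. Since $\partial_s\varphi=0$, the second term drops out and we are left with $\big<\partial_t \vec X,\,\vec n\big>_\Gamma = 0$, so $\frac{dA}{dt}\equiv 0$ and $A(t)=A(0)$ for all $t\ge 0$, which gives \eqref{eqn:totalmassw}.

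There is essentially no hard step here; the only points requiring care are (a) justifying the interchange of $\partial_t$ and the integral on the fixed domain $I$ (automatic for a weak solution with enough regularity in $t$), and (b) checking that the boundary terms in the integration by parts vanish, which is precisely what the Dirichlet condition $y=0$ at the endpoints secures. The essential algebraic observation is that the signed area admits a geometric rewriting in terms of the normal velocity, so that the mass balance is built directly into the weak equation through the test function $\varphi\equiv 1$; this also indicates why the zero-mass flux condition \eqref{eqn:weakBC3} is the natural boundary condition one loses nothing by not enforcing strongly in \eqref{eqn:weq1}.
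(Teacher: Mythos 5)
Your proposal is correct and follows essentially the same route as the paper's proof: rewrite $A(t)$ over the fixed domain $I$, differentiate, integrate by parts in $\rho$ using the contact-point condition \eqref{eqn:weakBC1} to kill the boundary term, identify the result as $\big<\partial_t\vec X,\vec n\big>_\Gamma$, and conclude with the test function $\varphi\equiv 1$ in \eqref{eqn:weq1}. No discrepancies to report.
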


\begin{proof}
Differentiating the left equation in (\ref{eqn:totalmass}) with respect to $t$,
integrating by parts and noting (\ref{eqn:weakBC1}), we obtain
\begin{eqnarray*}
    \frac{d}{dt}A(t)&=&\frac{d}{dt}\int_0^{L(t)}y(s,t) x_s(s,t)\;ds
    =\frac{d}{dt}\int_0^1 y(\rho,t) x_\rho(\rho,t)\;d\rho
    =\int_0^1 (y_t x_\rho+y x_{\rho t})\;d\rho\\
    &=&\int_0^1 (y_t x_\rho-y_\rho x_t)~d\rho + \bigl(y x_t\bigr)\Big|_{\rho=0}^{\rho=1}
    =\int_0^1 (x_t, y_t)\cdot(-y_\rho, x_\rho)\;d\rho
    =\int_{\Gamma(t)}\vec{X}_t\cdot\vec{n}\;ds.
\end{eqnarray*}
Plugging \eqref{eqn:weq1} with $\varphi=1$ into the above equation, we have
\begin{eqnarray}
\frac{d}{dt}A(t)=\int_{\Gamma(t)}\vec{X}_t\cdot\vec{n}\;ds
 =-\int_{\Gamma(t)}\mu_s\;\varphi_s\;ds
 =0, \qquad t\ge0,
\end{eqnarray}
which immediately implies the mass conservation in (\ref{eqn:totalmassw}) by noticing the initial condition
(\ref{init}).
\end{proof}

\begin{prop}[Energy dissipation]
Assume that $(\vec X(\rho,t), \mu(\rho,t),\kappa(\rho,t))$ be a weak solution
of the variational problem \eqref{eqn:weq1}-\eqref{eqn:weq3} and it has
higher regularity, i.e., $\vec{X}(\rho,t)\in C^1\bigl(C^2(I);[0,T]\bigr)\times
C^1\bigl(C^2(I);[0,T]\bigr)$, then the total energy of the thin
film is decreasing during the evolution, i.e.,
\begin{equation}\label{eqn:totalenegw}
W(t)\le W(t_1)\le W(0)=\int_0^{L_0}\gamma(\theta(s,0))\,ds-\sigma(x_c^r(0)-x_c^l(0)), \qquad
 t\ge t_1\ge0.
\end{equation}
\end{prop}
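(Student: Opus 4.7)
The plan is to prove the pointwise estimate $\frac{dW}{dt}\le 0$ for a.e.\ $t\in(0,T]$, after which the stated chain $W(t)\le W(t_1)\le W(0)$ follows by integration in time together with the initial data \eqref{init}. First I would differentiate
\begin{equation*}
W(t)=\int_{\Gamma(t)}\gamma(\theta)\,ds-\sigma\bigl[x_c^r(t)-x_c^l(t)\bigr]
\end{equation*}
with respect to $t$, pulling the integral back to the fixed reference interval $I=[0,1]$ via the parametrization $\vec X(\rho,t)$ so that the domain of integration is $t$-independent. Expanding $\partial_t|\partial_\rho\vec X|=\vec\tau\cdot\partial_\rho\vec X_t$ (with $\vec\tau=\partial_s\vec X$) and $\partial_t\theta$ from $\partial_t\vec\tau$, then integrating by parts in $\rho$ (which is justified by the assumed $C^2$-in-$\rho$ regularity), I would obtain the standard first-variation identity for the anisotropic length functional,
\begin{equation*}
\frac{d}{dt}\int_{\Gamma(t)}\gamma(\theta)\,ds=-\int_{\Gamma(t)}\bigl[\gamma(\theta)+\gamma''(\theta)\bigr]\kappa\,(\vec X_t\cdot\vec n)\,ds+\Bigl[\bigl(\gamma(\theta)\vec\tau-\gamma'(\theta)\vec n\bigr)\cdot\vec X_t\Bigr]_{\rho=0}^{\rho=1},
\end{equation*}
whose interior integrand equals $\mu\,(\vec X_t\cdot\vec n)$ by \eqref{eqn:weak2}.

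The interior contribution is then dealt with by using $\mu\in H^1(I)$ as the admissible test function $\varphi$ in \eqref{eqn:weq1}, which yields
\begin{equation*}
\int_{\Gamma(t)}\mu\,(\vec X_t\cdot\vec n)\,ds=-\int_{\Gamma(t)}|\partial_s\mu|^2\,ds\le 0,
\end{equation*}
reproducing the classical surface-diffusion dissipation. This step is essentially routine once the first-variation identity is established.

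The more delicate part is the boundary contribution at the two moving contact points. Because the contact-point condition \eqref{eqn:weakBC1} forces $y(0,t)=y(L,t)=0$ for all $t$, the velocity at each endpoint is purely horizontal, $\vec X_t(0,t)=(\dot x_c^l,0)$ and $\vec X_t(L,t)=(\dot x_c^r,0)$. Writing $\vec\tau=(\cos\theta,\sin\theta)$ and $\vec n$ accordingly at each contact point (so $\theta=\theta_d^l$ and $\theta=\theta_d^r$ there), the boundary bracket above, combined with $-\sigma\frac{d}{dt}(x_c^r-x_c^l)$ coming from the second term in $W$, collapses precisely to
\begin{equation*}
-\dot x_c^r\,f(\theta_d^r;\sigma)+\dot x_c^l\,f(\theta_d^l;\sigma),
\end{equation*}
with $f$ as in \eqref{aYoung}. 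Substituting the relaxed contact angle condition \eqref{eqn:weakBC2a} converts each of these into $-\eta f(\theta_d^{r,l};\sigma)^2\le 0$. Assembling everything,
\begin{equation*}
\frac{dW}{dt}=-\int_{\Gamma(t)}|\partial_s\mu|^2\,ds-\eta\bigl[f(\theta_d^l;\sigma)^2+f(\theta_d^r;\sigma)^2\bigr]\le 0,
\end{equation*}
and integrating over $[t_1,t]$ with $t\ge t_1\ge 0$ gives \eqref{eqn:totalenegw}.

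The main obstacle I anticipate is the sign bookkeeping in the boundary assembly: one must be careful with the paper's conventions for the outer normal $\vec n$ and for the orientation angle $\theta$ so that the trigonometric combination arising from the boundary of $\int_\Gamma\gamma(\theta)\,ds$ produces $\gamma(\theta)\cos\theta-\gamma'(\theta)\sin\theta$ with the correct sign at each endpoint, and then combines with the substrate-energy term to yield $f(\theta_d;\sigma)\dot x_c$ with the sign that makes the relaxed contact angle law \eqref{eqn:weakBC2a} produce a non-positive contribution. This is exactly the anisotropic Young-equation computation performed in \cite{Jiang15a}, which may be invoked here.
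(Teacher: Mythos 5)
Your overall strategy matches the paper's: differentiate $W(t)$ on the fixed reference domain, integrate by parts to split the variation into an interior part and a contact-point part, kill the interior part with the test function $\varphi=\mu$ in \eqref{eqn:weq1}, and kill the boundary part with the relaxed contact angle law \eqref{eqn:weakBC2a}. Where you genuinely diverge is the interior term. You identify the integrand with $\mu\,(\vec X_t\cdot\vec n)$ by invoking the \emph{strong-form} relations $\mu=[\gamma+\gamma'']\kappa$ and $\kappa\vec n=-\vec X_{ss}$ from \eqref{eqn:weak2}. The paper, working strictly with the weak solution, instead keeps the interior term as $-\int_\Gamma[\gamma+\gamma'']\vec X_t\cdot\vec X_{ss}\,ds$ and converts it via \eqref{eqn:weq2} with $\psi=\vec X_t\cdot\vec n$ and via \eqref{eqn:weq3} with the test function $\boldsymbol{\omega}=[\gamma+\gamma'']\vec X_t-c_1\dot x_c^l\boldsymbol{\zeta}_1-c_2\dot x_c^r\boldsymbol{\zeta}_2$ (the $\boldsymbol{\zeta}_i$ corrections are needed because $\vec X_t$ does not vanish at the contact points, so $[\gamma+\gamma'']\vec X_t\notin H^1_0\times H^1_0$), followed by a limiting argument $\|\zeta_i\|_{L^2}\to0$. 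Your shortcut is legitimate under the assumed $C^1(C^2(I);[0,T])$ regularity --- \eqref{eqn:weq2} tested against all of $H^1(I)$ gives $\mu=[\gamma+\gamma'']\kappa$ a.e.\ immediately, and \eqref{eqn:weq3} plus the fundamental lemma of the calculus of variations upgrades to $\kappa\vec n=-\vec X_{ss}$ a.e.\ --- and it is arguably cleaner than the $\boldsymbol{\zeta}_i$ construction; but you should state this upgrade explicitly rather than citing \eqref{eqn:weak2}, since the hypothesis only gives you a weak solution of \eqref{eqn:weq1}--\eqref{eqn:weq3}.

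The one concrete defect is the sign bookkeeping you yourself flagged: with the paper's conventions ($\vec n=(-y_s,x_s)$, $\kappa=-\vec X_{ss}\cdot\vec n$, $\perp$ = clockwise rotation), the boundary bracket combined with $-\sigma\frac{d}{dt}(x_c^r-x_c^l)$ is $f(\theta_d^r;\sigma)\,\dot x_c^r-f(\theta_d^l;\sigma)\,\dot x_c^l$, i.e.\ the \emph{opposite} of what you wrote; only with this sign does substituting $\dot x_c^l=\eta f(\theta_d^l;\sigma)$, $\dot x_c^r=-\eta f(\theta_d^r;\sigma)$ produce $-\eta[f(\theta_d^l;\sigma)^2+f(\theta_d^r;\sigma)^2]\le0$. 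Likewise your first-variation display carries the interior term with a sign opposite to the paper's (which has $\frac{d}{dt}\int_\Gamma\gamma\,ds=+\int_\Gamma[\gamma+\gamma'']\kappa(\vec X_t\cdot\vec n)\,ds+\text{bdry}$ under its curvature convention), and your bracket $(\gamma\vec\tau-\gamma'\vec n)\cdot\vec X_t$ should read $(\gamma\vec\tau+\gamma'\vec n)\cdot\vec X_t$ to reproduce $\gamma\cos\theta-\gamma'\sin\theta$ at the endpoints. Your final dissipation identity is nonetheless the correct one, so these are repairable convention errors rather than a gap in the argument.
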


\begin{proof}
Differentiating the right equation in (\ref{eqn:totalmass}) with respect to $t$,
we get
\begin{eqnarray}
\frac{d}{dt}W(t)
&=&\frac{d}{dt}\int_{\Gamma(t)}\gamma(\theta)\;d s-\sigma\Bigl[\frac{d x_c^r(t)}{dt}-\frac{dx_c^l(t)}{dt}\Bigr]
=\frac{d}{dt}\int_{0}^{1}\gamma(\theta)s_\rho\;d\rho
-\sigma\Bigl[\frac{d x_c^r(t)}{dt}-\frac{dx_c^l(t)}{dt}\Bigr] \nonumber\\[0.5em]
&=&\int_{0}^{1}\gamma\,'(\theta)\theta_t s_\rho\;d \rho+\int_{0}^{1}\gamma(\theta) s_{\rho t}\;d\rho-
\sigma\Bigl[\frac{d x_c^r(t)}{dt}-\frac{dx_c^l(t)}{dt}\Bigr] \label{eqn:dw}
:\triangleq I + II + III.
\end{eqnarray}
Using  the following expressions
\begin{eqnarray*}
&&\theta = \arctan\frac{y_\rho}{x_\rho},\quad
  \theta_t = -\frac{\vec X_\rho^{\perp}\cdot\vec X_{\rho t}}{|\vec{X}_\rho|^2},\quad
  \theta_s = -\vec X_s^{\perp}\cdot\vec X_{ss}, \quad
   s_{\rho t} = \frac{x_\rho x_{\rho t} + y_\rho y_{\rho t}}
  {(x_\rho^2 + y_\rho^2)^{1/2}} = \frac{\vec{X}_\rho}{|\vec{X}_\rho|}\cdot \vec{X}_{\rho t}, \\
  && \vec{n}=(-y_s, x_s), \qquad \vec{X}=\bigl(x(s,t),y(s,t)\bigr), \qquad \vec{X}_{ss} \sslash \vec{n},
  \qquad \vec{X}_s=\vec{n}^\perp,
\end{eqnarray*}
where the notation $^\perp$ denotes clockwise rotation by $\frac{\pi}{2}$,
integrating by parts and noting (\ref{eqn:weakBC1})-(\ref{eqn:weakBC3}), we have
\begin{eqnarray}\label{egweak1}
I&\triangleq&\int_{0}^{1}\gamma\,'(\theta)\theta_t s_\rho\;d\rho
=\int_0^1\gamma\,'(\theta)\frac{-\vec X_\rho^{\perp}\cdot\vec X_{\rho t}}{|\vec{X}_\rho|^2}s_\rho\;d\rho \nonumber\\
&=&\int_0^1\gamma\,'(\theta)\frac{-\vec{X}_\rho^{\perp}}{|\vec{X}_\rho|}\cdot\vec{X}_{\rho t}\;d\rho \nonumber\\
&=&\Bigl(\gamma\,'(\theta)\frac{-\vec{X}_\rho^{\perp}}{|\vec{X}_\rho|}\cdot\vec{X}_{t}\Bigr)\Big|_{\rho=0}^{\rho=1}
+\int_0^1\Bigl(\gamma\,'(\theta)\frac{\vec X_\rho^{\perp}}{|\vec{X}_\rho|}\Bigr)_\rho\cdot\vec{X}_t\;d\rho \nonumber\\
&=&\Bigl(\gamma\,'(\theta)\vec n\cdot\vec X_{t}\Bigr)\Big|_{s=0}^{s=L(t)}+\int_{\Gamma}\Bigl(\gamma\,'(\theta)\vec X_s^{\perp}\Bigr)_s\cdot\vec X_t\;ds \nonumber\\
&=&\Bigl(\gamma\,'(\theta)\vec n\cdot\vec X_{t}\Bigr)\Big|_{s=0}^{s=L(t)}+\int_{\Gamma}\gamma\,''(\theta)(-\vec X_s^{\perp}\cdot\vec X_{ss})(\vec X_s^{\perp}\cdot\vec X_t)\;ds+\int_{\Gamma}\gamma'(\theta)\vec X_{ss}^{\perp}\cdot\vec X_t\;ds \nonumber\\
&=&\Bigl(\gamma\,'(\theta)\vec n\cdot\vec X_{t}\Bigr)\Big|_{s=0}^{s=L(t)}-\int_{\Gamma}\gamma\,''(\theta)(\vec n\cdot\vec X_{ss})(\vec n\cdot\vec X_t)\;ds+\int_{\Gamma}\gamma'(\theta)\vec X_{ss}^{\perp}\cdot\vec X_t\;ds
\nonumber\\
&=&\Bigl(\gamma\,'(\theta)\vec n\cdot\vec X_{t}\Bigr)\Big|_{s=0}^{s=L(t)}-\int_{\Gamma}\gamma\,''(\theta)\vec X_{ss}\cdot\vec X_t\;ds+\int_{\Gamma}\gamma'(\theta)\vec X_{ss}^{\perp}\cdot\vec X_t\;ds,
\end{eqnarray}
\begin{eqnarray}\label{egweak2}
II&\triangleq&\int_{0}^{1}\gamma(\theta)s_{\rho t}\;d\rho=\int_0^1\gamma(\theta)\frac{\vec{X}_{\rho}}{|\vec{X}_\rho|}\cdot \vec{X}_{\rho t}\;d\rho  \nonumber\\
&=&\Bigl(\gamma(\theta)\frac{\vec{X}_\rho}{|\vec{X}_\rho|}\cdot{\vec {X}_t}\Bigr)\Big|_{\rho=0}^{\rho=1}-\int_0^1\Bigl(\gamma(\theta)\frac{\vec X_\rho}{|\vec X_\rho|}\Bigr)_{\rho}\cdot\vec X_t\;d\rho \nonumber\\
&=&\Bigl(\gamma(\theta)\vec X_s\cdot\vec X_t\Bigr)\Big|_{s=0}^{s=L(t)}-\int_{\Gamma}\Bigl(\gamma(\theta)\vec X_s\Bigr)_s\cdot\vec X_t\;ds \nonumber\\
&=&\Bigl(\gamma(\theta)\vec X_s\cdot\vec X_t\Bigr)\Big|_{s=0}^{s=L(t)}-\int_{\Gamma}\gamma(\theta)\vec X_{ss}\cdot\vec X_{t}\;ds-\int_{\Gamma}\gamma\,'(\theta)(-\vec X_s^{\perp}\cdot\vec X_{ss})(\vec X_{s}\cdot\vec X_t)\;ds
\nonumber \\
&=&\Bigl(\gamma(\theta)\vec X_s\cdot\vec X_t\Bigr)\Big|_{s=0}^{s=L(t)}-\int_{\Gamma}\gamma(\theta)\vec X_{ss}\cdot\vec X_{t}\;ds-\int_{\Gamma}\gamma\,'(\theta)\vec X_{ss}^{\perp}\cdot\vec X_t\;ds, \\
\nonumber \\[0.3cm]
III&\triangleq&-\sigma\Bigl[\frac{d x_c^r(t)}{dt}-\frac{dx_c^l(t)}{dt}\Bigr].
\label{egweak3}
\end{eqnarray}
At the two contact points, we have the following expressions
\begin{eqnarray}\label{egweak4}
&&\vec{X}_s\big|_{s=0}=(\cos \theta_d^l, \sin \theta_d^l), \qquad
\vec{n}\big|_{s=0}=(-\sin \theta_d^l, \cos \theta_d^l), \qquad
\vec{X}_t\big|_{s=0}=\Bigl(\frac{d x_c^l}{d t},0\Bigr),\\
\label{egweak5}
&&\vec{X}_s\big|_{s=L}=(\cos \theta_d^r, \sin \theta_d^r), \qquad
\vec{n}\big|_{s=L}=(-\sin \theta_d^r, \cos \theta_d^r), \qquad
\vec{X}_t\big|_{s=L}=\Bigl(\frac{d x_c^r}{d t},0\Bigr).
\end{eqnarray}
Substituting \eqref{egweak1}-\eqref{egweak3} into \eqref{eqn:dw} and noting \eqref{egweak4}-\eqref{egweak5},
we obtain
\begin{eqnarray}
\frac{d}{dt}W(t)&=&\Bigl(\gamma\,'(\theta)\vec{X}_t\cdot\vec{n}+
\gamma(\theta)\vec{X}_{t}\cdot\vec{X}_{s}\Bigr)\Big|_{s=0}^{s=L(t)}
-\sigma\Bigl[\frac{d x_c^r(t)}{dt}-\frac{dx_c^l(t)}{dt}\Bigr]
-\int_{\Gamma(t)}\bigl[\gamma(\theta)+\gamma''(\theta)\bigr]\bigl(\vec{X}_{t}\cdot\vec{X}_{ss}\bigr)
\;ds, \nonumber\\
&=&f(\theta_d^r;\sigma)\,\frac{dx_c^r(t)}{dt}
-f(\theta_d^l;\sigma)\,\frac{dx_c^l(t)}{dt}
-\int_{\Gamma(t)}\bigl[\gamma(\theta)+\gamma''(\theta)\bigr]\bigl(\vec{X}_{t}\cdot\vec{X}_{ss}\bigr)\;ds\nonumber\\
&=&-\frac{1}{\eta}\Bigl[\Bigl(\frac{dx_c^r(t)}{dt}\Bigr)^2+\Bigl(\frac{dx_c^l(t)}{dt}\Bigr)^2\Bigr]
-\int_{\Gamma(t)}\bigl[\gamma(\theta)+\gamma''(\theta)\bigr]\bigl(\vec{X}_{t}\cdot\vec{X}_{ss}\bigr)
\;ds. \label{eqn:energydis}
\end{eqnarray}
Choosing the test functions $\varphi, \psi, \boldsymbol{\omega}$ in the variational
problem \eqref{eqn:weq1}-\eqref{eqn:weq3} as
\begin{equation}
\label{eqn:testfun}
\varphi=\mu, \qquad \psi=\vec{X}_{t}\cdot\vec{n}, \qquad \boldsymbol{\omega}=\bigl[\gamma(\theta)+\gamma''(\theta)\bigr]\vec {X}_{t}-c_1\frac{dx_c^l(t)}{dt}\boldsymbol{\zeta}_1-c_2\frac{dx_c^r(t)}{dt}\boldsymbol{\zeta}_2,
\end{equation}
where $c_1=\gamma(\theta_d^l)+\gamma\,''(\theta_d^l)$,
$c_2=\gamma(\theta_d^r)+\gamma\,''(\theta_d^r)$,  $\boldsymbol{\zeta}_1=(\zeta_1(\rho),0)$
and $\boldsymbol{\zeta}_2=(\zeta_2(\rho),0)$ with $\zeta_1\in H^1_{1,0}(I)$ and
$\zeta_2\in H^1_{0,1}(I)$ to be determined later such that
$\boldsymbol{\omega}\in H_{0}^{1}(I)\times H_{0}^{1}(I)$, then we
can simplify \eqref{eqn:energydis} as
\begin{eqnarray}
\label{eqn:weakdissipate}
\frac{d}{dt}W(t)&=&-\frac{1}{\eta}\Bigl[\Bigl(\frac{dx_c^l(t)}{dt}\Bigr)^2+
\Bigl(\frac{dx_c^r(t)}{dt}\Bigr)^2\Bigr]-\int_{\Gamma(t)}\big(\mu_s\big)^2\;ds \nonumber\\
&&-c_1\frac{dx_c^l(t)}{dt}\int_{\Gamma(t)}\boldsymbol{\zeta}_1\cdot(\kappa\vec{n}+\vec {X}_{ss})\;ds-c_2\frac{dx_c^r(t)}{dt}\int_{\Gamma(t)}\boldsymbol{\zeta}_2\cdot(\kappa\vec{n}+\vec{X}_{ss})\;ds\nonumber\\
&\le&-c_1\frac{dx_c^l(t)}{dt}\int_{\Gamma(t)}\boldsymbol{\zeta}_1\cdot(\kappa\vec{n}+\vec {X}_{ss})\;ds-c_2\frac{dx_c^r(t)}{dt}\int_{\Gamma(t)}\boldsymbol{\zeta}_2\cdot(\kappa\vec{n}+\vec{X}_{ss})\;ds.
\end{eqnarray}
Under the assumption $\vec{X}(\rho,t)\in C^1\bigl(C^2(I);[0,T]\bigr)\times
C^1\bigl(C^2(I);[0,T]\bigr)$, we know that $(\kappa\vec{n}+\vec{X}_{ss})\in L^2(I)\times L^2(I)$,
and in addition, the two functions $c_1\frac{dx_c^l(t)}{dt}$ and $c_2\frac{dx_c^r(t)}{dt}$ are finite and bounded.
By taking $\zeta_1\in H^1_{1,0}(I)$ and
$\zeta_2\in H^1_{0,1}(I)$ such that $\|\zeta_1\|_{L^2}$ and $\|\zeta_2\|_{L^2}$ are as small as possible,
we obtain
\be
\frac{d}{dt}W(t)\leq 0, \qquad t\ge0,
\ee
which immediately implies the energy dissipation in (\ref{eqn:totalenegw}) by noticing the initial condition
(\ref{init}).
\end{proof}

\subsection{Parametric finite element approximation}
In order to present the PFEM for the variational problem \eqref{eqn:weq1}-\eqref{eqn:weq3},
let $h=\frac{1}{N}$ with $N$ a positive integer, denote $\rho_j=jh$ for
$j=0,1,\ldots,N$ and $I_j=[\rho_{j-1},\rho_{j}]$ for $j=1,2,\ldots,N$, thus
a uniform partition of $I$ is given as  $I=[0,1]=\bigcup_{j=1}^{N}I_j$, and
take time steps as $0=t_{0}<t_{1}<t_2<\ldots$
and denote time step sizes as $\tau_{m}:=t_{m+1}-t_{m}$ for $m\ge0$.
Introduce the finite dimensional approximations to $H^1(I)$ and $H^1_{a,b}(I)$ with $a$ and $b$
two given constants as
\bea
\label{eqn:FEMspace1}
&&V^h:=\{u\in C(I):\;u\mid_{I_{j}}\in P_1,\quad  j=1,2,\ldots,N\}\subset H^1(I),\\
\label{eqn:FEMspace2}
&&\mathcal{V}^h_{a,b}:=\{u \in V^h:\;u(0)=a,~u(1)=b\}\subset H^1_{a,b}(I),
\eea
where $P_1$ denotes all polynomials with degrees at most $1$.
Again, for the simplicity of notations, we
denote $\mathcal{V}^h_0=\mathcal{V}^h_{0,0}$.

Let $\Gamma^{m}:=\vec{X}^{m}$, $\vec{n}^m$, $\mu^m$ and $\kappa^m$ be the numerical approximations of the moving curve $\Gamma(t_m):=\vec{X}(\cdot,t_m)$, the normal vector $\vec{n}$, the chemical potential $\mu$ and the curvature
$\kappa$ at time $t_{m}$, respectively,
for $m\ge0$. For two piecewise continuous scalar or vector functions $u$ and $v$
defined on the interval $I$, with possible jumps at the nodes $\{\rho_j\}_{j=1}^{N-1}$,
we can define the mass lumped inner product $\big<\cdot,\cdot\big>_{\Gamma^m}^h$ over $\Gamma^m$ as
\begin{equation}
\big<u,~v\big>_{\Gamma^m}^h:=\frac{1}{2}\sum_{j=1}^{N}\Big|\vec{X}^m(\rho_{j})-
\vec{X}^m(\rho_{j-1})\Big|\Big[\big(u\cdot v\big)(\rho_j^-)+\big(u\cdot v\big)(\rho_{j-1}^+)\Big],
\end{equation}
where $u(\rho_j^\pm)=\lim\limits_{\rho\to \rho_j^\pm} u(\rho)$.
In this paper, we use the $P_1$ (linear) elements to approximate the moving curves,
therefore the numerical solutions for the moving interfaces are polygonal curves.
Then the normal vector of the numerical solution $\Gamma^m$ inside  each sub-interval
$I_j$ is a constant vector and
it has possible discontinuities or jumps at the nodes $\rho_j$ for for $j=1,2,\ldots,N-1$.
In fact, the normal vectors inside each sub-interval can be computed as $\vec{n}^m=-[\partial_s\vec {X}^m]^{\perp}=-\frac{[\partial_{\rho}\vec{X}^m]^{\perp}}{|\partial_{\rho}\vec{X}^m|}$.

Take $\Gamma^0=\vec{X}^0\in \mathcal{V}^h_{x_0(0),x_0(L_0)} \times \mathcal{V}^h_0$
such that $\vec{X}^0(\rho_j)=\vec{X}_0(s_j^0)$ with $s_j^0=jL_0/N=L_0\rho_j$ for $j=0,1,\ldots,N$ and obtain
$\mu^{0}\in V^h$ and $\kappa^{0}\in V^h$ via the initial data \eqref{init} and \eqref{eqn:weak2},
then  a semi-implicit {\sl parametric finite element method (PFEM)} for the variational problem
\eqref{eqn:weq1}-\eqref{eqn:weq3} can be given as:
For $m\ge0$, first
update the two contact point positions $x_c^l(t_{m+1})$ and $x_c^r(t_{m+1})$ via
the relaxed contact angle condition \eqref{eqn:weakBC2a} by using the forward Euler method
and then
find $\Gamma^{m+1}=\vec{X}^{m+1}\in \mathcal{V}^h_{a,b}\times
\mathcal{V}^h_0$ with the $x$-coordinate positions of the
moving contact points $a:=x_c^l(t_{m+1})\leq b:=x_c^r(t_{m+1})$, $\mu^{m+1}\in V^h$ and
$\kappa^{m+1}\in V^h$ such that
\begin{eqnarray}
&&\Big<\frac{\vec{X}^{m+1}-\vec X^{m}}{\tau_m},~\varphi_h\vec {n}^{m}\Big>_{\Gamma^{m}}^{h}+\big<\partial_{s}\mu^{m+1},~\partial_{s}\varphi_h\big>_{\Gamma^{m}}^{h}=0,
\qquad\forall~\varphi_h\in V^h,     \label{wf1}\\
&&\big<\mu^{m+1},~\psi_h\big>_{\Gamma^{m}}^{h}-\big<\big[\gamma(\theta^m)+\gamma\,''(\theta^m)\big]\kappa^{m+1},
~\psi_h\big>_{\Gamma^{m}}^{h}=0,
\qquad\forall~\psi_h\in V^h,  \label{wf2}\\
&&\big<\kappa^{m+1}\vec{n}^{m},~\boldsymbol{\omega}_h\big>_{\Gamma^{m}}^{h}-\big<\partial_{s}\vec {X}^{m+1},~\partial_{s}\boldsymbol{\omega}_h\big>_{\Gamma^{m}}^{h}=0,
\qquad\forall~\boldsymbol{\omega}_h\in \mathcal{V}^h_0\times \mathcal{V}^h_0. \label{wf3}
\end{eqnarray}

In the above numerical scheme, we use the semi-implicit
$P_1$-PFEM instead of the fully implicit PFEM such that only a linear system instead of
a fully nonlinear system to be solved. In our practical computation,
the linear system is solved by either the GMRES method or  the sparse
LU decomposition. In addition,
it has the following advantages in terms of efficiency:
(a) the numerical quadratures are calculated over the
curve $\Gamma^m$ instead of $\Gamma^{m+1}$; (b) for the nonlinear
term $\gamma(\theta)+\gamma\,''(\theta)$ in \eqref{wf2},
we can evaluate the values $\theta=\theta^m$ on the curve $\Gamma^m$ instead of $\Gamma^{m+1}$.
Thus it is much more efficient than the fully implicit PFEM.

For the case of isotropic surface energy, we have the following proposition:

\begin{prop}[Well-posedness for the isotropic case]
When the surface energy is isotropic, i.e. $\gamma(\theta)\equiv 1$,
the discrete variational problem (\ref{wf1})-(\ref{wf3}) is well-posed.
\end{prop}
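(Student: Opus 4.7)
The plan is to recast the scheme as a square linear system and prove uniqueness via an energy-type argument on the homogeneous problem. First I would exploit that in the isotropic case $\gamma(\theta)+\gamma''(\theta)\equiv 1$, so \eqref{wf2} reduces to $\langle\mu^{m+1}-\kappa^{m+1},\psi_h\rangle^h_{\Gamma^m}=0$ for all $\psi_h\in V^h$. Because the mass-lumped Gram matrix on $V^h$ is diagonal with strictly positive entries $\tfrac12(|I_j^*|+|I_{j+1}^*|)$ at interior nodes and $\tfrac12|I_1^*|$, $\tfrac12|I_N^*|$ at the boundary nodes (all edge lengths $|I_j^*|=|\vec X^m(\rho_j)-\vec X^m(\rho_{j-1})|>0$ on a non-degenerate $\Gamma^m$), this identity collapses to $\mu^{m+1}\equiv\kappa^{m+1}$ in $V^h$. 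The remaining system for $(\vec X^{m+1},\kappa^{m+1})\in(\mathcal V^h_{a,b}\times\mathcal V^h_0)\times V^h$ consists of \eqref{wf1} (with $\mu^{m+1}$ replaced by $\kappa^{m+1}$) and \eqref{wf3}, which is square, so well-posedness reduces to showing that the associated homogeneous problem (with $\vec X^m=\vec 0$ and $a=b=0$, so $\vec X^{m+1}\in\mathcal V^h_0\times\mathcal V^h_0$) admits only the trivial solution.

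Next I would test \eqref{wf1} with $\varphi_h=\kappa^{m+1}\in V^h$ and \eqref{wf3} with $\boldsymbol\omega_h=\vec X^{m+1}\in\mathcal V^h_0\times\mathcal V^h_0$. The two cross terms $\langle\vec X^{m+1},\kappa^{m+1}\vec n^m\rangle^h_{\Gamma^m}$ and $\langle\kappa^{m+1}\vec n^m,\vec X^{m+1}\rangle^h_{\Gamma^m}$ coincide, and combining the two tested identities yields
\begin{equation*}
\tau_m\,\big\langle\partial_s\kappa^{m+1},\partial_s\kappa^{m+1}\big\rangle^h_{\Gamma^m}+\big\langle\partial_s\vec X^{m+1},\partial_s\vec X^{m+1}\big\rangle^h_{\Gamma^m}=0.
\end{equation*}
Since $\partial_s$ of a $P_1$ function is piecewise constant on $\Gamma^m$, the mass-lumped inner products above coincide with the exact $L^2$ integrals and each squared norm is nonnegative; both must therefore vanish individually. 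By continuity in $V^h$, this forces $\kappa^{m+1}\equiv c$ for some constant $c\in\mathbb R$ and $\vec X^{m+1}$ constant, and the homogeneous Dirichlet conditions at $\rho=0,1$ then give $\vec X^{m+1}\equiv\vec 0$.

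It remains to rule out a nontrivial constant $c$, and this is the main obstacle: the energy subtraction annihilates globally constant modes of $\kappa^{m+1}$, which lie in the kernel of $\partial_s$, so the energy estimate alone does not determine $c$. Substituting $\vec X^{m+1}\equiv\vec 0$ and $\kappa^{m+1}\equiv c$ back into \eqref{wf3} leaves $c\,\langle\vec n^m,\boldsymbol\omega_h\rangle^h_{\Gamma^m}=0$ for every $\boldsymbol\omega_h\in\mathcal V^h_0\times\mathcal V^h_0$. Plugging in the hat function at each interior node in turn and using $|I_j^*|\,\vec n^m|_{I_j}=(\vec X^m(\rho_j)-\vec X^m(\rho_{j-1}))^\perp$, the condition rewrites as $c\,\bigl(\vec X^m(\rho_{j+1})-\vec X^m(\rho_{j-1})\bigr)^\perp=\vec 0$ for each $j=1,\ldots,N-1$. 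Under the mild non-degeneracy hypothesis that $\Gamma^m$ does not collapse to a polygon whose nodes at alternating indices coincide — a condition automatically satisfied by any admissible discrete interface of the thin film, whose two contact points are distinct and whose interior nodes lie strictly above the substrate — at least one of these difference vectors is nonzero, forcing $c=0$ and completing the proof.
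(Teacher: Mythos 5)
Your proof is correct, and it shares the paper's first step — using the positivity of the diagonal mass-lumped Gram matrix on $V^h$ to collapse \eqref{wf2} to $\mu^{m+1}\equiv\kappa^{m+1}$ — but then diverges: where the paper simply cites Theorem~2.1 of Barrett, Garcke and N\"urnberg for the well-posedness of the reduced system \eqref{wf15}--\eqref{wf35}, you prove it directly by counting the square system and running the standard energy argument on the homogeneous problem (test \eqref{wf1} with $\kappa^{m+1}$, \eqref{wf3} with $\vec X^{m+1}$, cancel the symmetric cross terms, and conclude $\partial_s\kappa^{m+1}=0$, $\partial_s\vec X^{m+1}=0$). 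The payoff of your route is that it surfaces the hypothesis the citation leaves implicit: the Dirichlet conditions kill the constant mode of $\vec X^{m+1}$, but the constant mode $\kappa^{m+1}\equiv c$ survives the energy identity and must be eliminated by a geometric non-degeneracy condition on $\Gamma^m$ (your requirement that some second-neighbor pair of nodes $\vec X^m(\rho_{j-1}),\vec X^m(\rho_{j+1})$ be distinct, which is the open-curve analogue of the spanning assumption on the discrete normals in the cited theorem and is automatic here since $x_c^l<x_c^r$). One cosmetic caveat: phrasing the homogeneous problem as ``$\vec X^m=\vec 0$'' is loose, since $\Gamma^m$ still supplies the coefficients of the linear system; what you mean — and what your computation actually uses — is that the difference of two solutions satisfies the system with zero data and zero boundary values over the same $\Gamma^m$. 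With that reading, the argument is complete and, unlike the paper's, self-contained.
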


\begin{proof}
When $\gamma(\theta)\equiv 1$, (\ref{wf2}) collapses to
\be
\big<\mu^{m+1}-\kappa^{m+1},~\psi_h\big>_{\Gamma^{m}}^{h}=0,
\qquad\forall~\psi_h\in V^h.  \label{wf23}
\ee
Noticing that $\mu^{m+1}\in V^h$ and
$\kappa^{m+1}\in V^h$, we know that $\mu^{m+1}-\kappa^{m+1}\in V^h$.
Thus we get
\be\label{wf24}
\mu^{m+1}(\rho)=\kappa^{m+1}(\rho), \qquad 0\le \rho\le 1.
\ee
Plugging (\ref{wf24}) into (\ref{wf1}), we obtain that the discrete variational problem
(\ref{wf1})-(\ref{wf3}) is equivalent to the following problem together with (\ref{wf24}):
\begin{eqnarray}
&&\Big<\frac{\vec{X}^{m+1}-\vec X^{m}}{\tau_m},~\varphi_h\vec {n}^{m}\Big>_{\Gamma^{m}}^{h}+\big<\partial_{s}\kappa^{m+1},~\partial_{s}\varphi_h\big>_{\Gamma^{m}}^{h}=0,
\qquad\forall~\varphi_h\in V^h,     \label{wf15}\\
&&\big<\kappa^{m+1}\vec{n}^{m},~\boldsymbol{\omega}_h\big>_{\Gamma^{m}}^{h}-\big<\partial_{s}\vec {X}^{m+1},~\partial_{s}\boldsymbol{\omega}_h\big>_{\Gamma^{m}}^{h}=0,
\qquad\forall~\boldsymbol{\omega}_h\in \mathcal{V}^h_0\times \mathcal{V}^h_0. \label{wf35}
\end{eqnarray}
The well-posedness of the discrete variational problem
(\ref{wf15})-(\ref{wf35}) has been
proved by J.W.~Barrett (see Theorem 2.1 in [38]),
thus the discrete variational problem (\ref{wf1})-(\ref{wf3}) is well-posed
when the surface energy is isotropic.
\end{proof}

By using matrix perturbation theory, we can also show that the discrete variational problem (\ref{wf1})-(\ref{wf3})
is (locally) well-posed when the surface energy $\gamma(\theta)$ is taken as (\ref{eqn:mfold}) when
$0<\beta\ll1$ is chosen sufficiently small under proper stability condition on the time step. Of course,
for weakly anisotropic  surface energy  with general $\gamma(\theta)$,
it is a very interesting and challenging problem to establish the well-posedness of the discrete variational
problem (\ref{wf1})-(\ref{wf3}).  In this case, due to that the variational problem (\ref{wf1})-(\ref{wf3})
is a mixed type,
some kind of inf-sup condition of the finite element spaces $V^h$ and $\mathcal{V}^h_0$ need to be proved or
required.


In the above numerical scheme, initially at $t=t_0=0$, we can always choose
$\Gamma^0=\vec{X}^0\in \mathcal{V}^h_{x_0(0),x_0(L_0)} \times \mathcal{V}^h_0$ such that the mesh points $\{\vec{X}^0(\rho_j)\}_{j=0}^N$ are equally distributed
with respect to the arc length $s$. Certainly, when $m\ge1$, the partition points on
$\Gamma^m$ might be no longer equally distributed
with respect to the arc length. However, this might not bring pronounced
disadvantages for the numerical scheme since we have made the time-independent spatial
variable $\rho$ more flexible instead of restricting it to be the arc length of the curve.
In fact, if we define the mesh-distribution function as $\Psi(t=t_m)=\Psi^m:=\frac{\max_{1\le j\le N}||\vec X^{m}(\rho_{j})-\vec X^m(\rho_{j-1})||}{\min_{1\le j\le N}||\vec X^m(\rho_{j})-\vec X^m(\rho_{j-1})||}$ at the time level
$t_m$, then from our extensive numerical simulations, we have observed that
$\Psi^m \rightarrow 1$ when $m\to\infty$, i.e., the mesh is almost
equally distributed with respect to the arc length when $m\gg1$ (cf. Figs.~\ref{fig:weakenergy}(b) and \ref{fig:strongenergy}(b)).
This long-time equidistribution property is due to the Eq.~\eqref{wf3} used in the PFEM
\cite{Barrett07a}.

\subsection{Numerical convergence test}

In this section, we investigate the numerical convergence order of the proposed PFEM, i.e., Eqs~\eqref{wf1}-\eqref{wf3}, by performing simulations for a closed curve evolution or an open curve evolution (i.e., solid-state dewetting) under the surface diffusion flow. The governing equations for a closed curve evolution are given by Eqs.~\eqref{eqn:motion}, \eqref{eqn:weakly} and \eqref{eqn:kappa} with periodic boundary conditions, and the solid-state dewetting problem can be described as an open curve evolution, while the governing equations are the same as those for a closed curve evolution, but need to couple with the boundary conditions~\eqref{eqn:weakBC1}-\eqref{eqn:weakBC3}.

In the paper, we use essentially uniform time steps in our numerical simulations, i.e., $\tau_m=\tau$ for $m=0,1,\ldots,M-1$. In order to compute the convergence order at any fixed time, we can define the following numerical approximation solution in any time interval as~\cite{Barrett07a}:
\begin{equation}
\vec{X}_{h,\tau}(\rho_j,t)=\frac{t-t_{m-1}}{\tau}\vec{X}^m(\rho_j)+\frac{t_m-t}{\tau}\vec{X}^{m-1}(\rho_j),\quad j=0,1,\ldots,N,\quad t\in[t_{m-1},t_{m}],
\end{equation}
where $h$ and $\tau$ denote the uniform grid size and time step that we used in the numerical simulations. The numerical error $e_{h,\tau}(t)$ in the $L^{\infty}$ norm  can be measured as
\begin{equation}\label{errorn}
e_{h,\tau}(t)=\parallel\vec{X}_{h,\tau}-\vec{X}_{\frac{h}{2},\frac{\tau}{4}}\parallel_{L^{\infty}}=\max_{0\leq j\leq N}\min_{\rho\in [0,1]}|\vec{X}_{h,\tau}(\rho_j,t)-\vec{X}_{\frac{h}{2},\frac{\tau}{4}}(\rho,t)|,
\end{equation}
where the curve $\vec{X}_{\frac{h}{2},\frac{\tau}{4}}(\rho,t)$ belongs to the piecewise linear finite element vector spaces and at the interval nodes $\rho=\rho_j$, its values are equal to the values of numerical solutions $\vec{X}_{\frac{h}{2},\frac{\tau}{4}}(\rho_j,t)$. We remark here that the above $L^{\infty}$-norm in (\ref{errorn})
has been adapted in \cite{Barrett07a,Barrett07b,Barrett08IMA,Barrett10Euro} for studying
numerically convergence rate of the PFEM for time evolution of a closed curve under motion by mean curvature
or the Willmore flow.

To the best of our knowledge, convergence rate of PFEM has been reported in the literature
for time evolution of a closed curve under motion by mean curvature
or the Willmore flow \cite{Barrett07a,Barrett07b,Barrett08IMA,Barrett10Euro}.
However, there exists few literature to show the numerical convergence order about front-tracking methods for solving surface diffusion equations, especially for the evolution of open curves. In the following, we will present convergence order results of the proposed PFEM for simulating the surface diffusion flow, including the two different cases: closed curve evolution and open curve evolution (i.e., simulating solid-state dewetting).

In order to test the convergence order of the proposed numerical scheme, the computational set-up is prepared as follows: for a closed curve evolution, including the isotropic (shown in Table~\ref{tb:order1}) and anisotropic (shown in Table~\ref{tb:order3}) cases, the initial shape of thin film
is chosen as a closed tube, i.e., a rectangle of length $4$ and width $1$ added by two semi-circles with radii of $0.5$ to its left and right sides, and the parameters $h_0=(8+\pi)/120$ and $\tau_0=0.01$; for an open curve evolution, also including the isotropic (shown in Table~\ref{tb:order2}) and anisotropic (shown in Table~\ref{tb:order4}) cases, the initial shape of thin film is chosen as a rectangle island of length $5$ and
thickness $1$, and $h_0=0.05$ and $\tau_0=0.005$.

We compare the convergence order results for the above four cases under three different times, i.e., $t=0.5$, $2.0$ and $5.0$. As shown in Tables~\ref{tb:order1}-\ref{tb:order4}, we can
clearly observe that: (i) for closed curve evolution cases, the convergence rate can almost perfectly attain the second order in the $L^{\infty}$-norm sense under the isotropic surface energy (see Table~\ref{tb:order1}), but numerical experiments indicate that the surface energy anisotropy may reduce the convergence rate to about between $1.5$ and $1.8$ (see Table~\ref{tb:order3}); and (ii) for open curve evolution cases, the convergence rates may be further reduced to only first order for the isotropic and anisotropic cases (see Tables~\ref{tb:order2}\&\ref{tb:order4}).
The order reduction might due to that the forward Euler scheme was applied
to discretize the relaxed contact angle boundary condition, i.e., Eq.~\eqref{eqn:weakBC2a}.

\begin{table}[H]
\def\temptablewidth{1\textwidth}
\vspace{-12pt}
\caption{Convergence rates in the $L^{\infty}$-norm for a closed curve evolution
under the isotropic surface diffusion flow.}
{\rule{\temptablewidth}{1pt}}
\begin{tabular*}{\temptablewidth}{@{\extracolsep{\fill}}llllll}
\multirow{2}{2cm}{$e_{h,\tau}(t)$} & $h=h_0$ & $h_0/2$ &$h_0/2^2$  &$h_0/2^3$ & $h_0/2^4$ \\
 & $\tau=\tau_0$ & $\tau_0/2^2$ &$\tau_0/2^4$  &$\tau_0/2^6$ & $\tau_0/2^8$ \\\hline
$e_{h,\tau}(t=0.5)$  &  4.58E-3 & 1.09E-3 &2.63E-4 &6.40E-5& 1.58E-5\\
    order &--    &2.07 &2.05 &2.04 &2.02\\ \hline
 $e_{h,\tau}(t=2.0)$  &   3.61E-3  &   9.43E-4 & 2.45E-4 &6.31E-5 & 1.61E-5\\
    order &--    &1.94 &1.95 &1.96 &1.97\\ \hline
   $e_{h,\tau}(t=5.0)$  & 3.63E-3 &  9.47E-4 & 2.46E-4 &6.33E-5 &1.62E-5\\
    order  &--    &1.94 &1.95 &1.96 &1.97
 \end{tabular*}
{\rule{\temptablewidth}{1pt}}
\label{tb:order1}
\end{table}

\begin{table}[H]
\def\temptablewidth{1\textwidth}
\vspace{-12pt}
\caption{Convergence rates in the $L^{\infty}$-norm for a closed curve evolution
under the anisotropic surface diffusion flow, where the parameters of the surface energy are chosen as:
$k=4,\beta=0.06,\phi=0$.}
{\rule{\temptablewidth}{1pt}}
\begin{tabular*}{\temptablewidth}{@{\extracolsep{\fill}}llllll}
\multirow{2}{2cm}{$e_{h,\tau}(t)$} & $h=h_0$ & $h_0/2$ &$h_0/2^2$  &$h_0/2^3$ & $h_0/2^4$ \\
 & $\tau=\tau_0$ & $\tau_0/2^2$ &$\tau_0/2^4$  &$\tau_0/2^6$ & $\tau_0/2^8$ \\\hline
$e_{h,\tau}(t=0.5)$  &  3.82E-2 & 1.43E-2 &6.05E-3 &2.19E-3& 6.76E-4\\
    order &--    &1.41 &1.24 &1.47&1.69\\ \hline
 $e_{h,\tau}(t=2.0)$  &   1.80E-2 &   6.48E-3 & 2.47E-3 &7.99E-4 & 2.24E-4\\
    order &--    &1.47&1.39 &1.63 &1.83\\ \hline
   $e_{h,\tau}(t=5.0)$  & 1.74E-2 & 6.19E-3 &2.36E-3 &7.60E-4& 2.12E-4\\
    order  &--    &1.49 &1.39 &1.64 &1.84
 \end{tabular*}
{\rule{\temptablewidth}{1pt}}
\label{tb:order3}
\end{table}

\begin{table}[H]
\def\temptablewidth{1\textwidth}
\vspace{-12pt}
\caption{Convergence rates in the $L^{\infty}$-norm for an open curve evolution
under the isotropic surface diffusion flow (solid-state dewetting with isotropic surface energies),
where the computational parameters are chosen as: $\beta=0, \sigma=\cos(5\pi/6)$.}
{\rule{\temptablewidth}{1pt}}
\begin{tabular*}{\temptablewidth}{@{\extracolsep{\fill}}lllll}
\multirow{2}{2cm}{$e_{h,\tau}(t)$} & $h=h_0$ & $h_0/2$ &$h_0/2^2$  &$h_0/2^3$  \\
 & $\tau=\tau_0$ & $\tau_0/2^2$ &$\tau_0/2^4$  &$\tau_0/2^6$  \\\hline
$e_{h,\tau}(t=0.5)$  &   2.59E-2  &1.32E-2  &6.52E-3 & 3.29E-3\\
    order  &--   &0.97 &1.01 &0.99 \\ \hline
$e_{h,\tau}(t=2.0)$  &  2.39E-2  &1.22E-2 &6.10E-3 & 3.07E-3\\
    order  &--   &0.97 &1.00 &0.99 \\ \hline
$e_{h,\tau}(t=5.0)$  &  1.91E-2  &9.67E-3 &4.84E-3& 2.43E-3\\
    order  & --  &0.98 &1.00 &0.99
 \end{tabular*}
{\rule{\temptablewidth}{1pt}}
\label{tb:order2}
\end{table}

\begin{table}[H]
\def\temptablewidth{1\textwidth}
\vspace{-12pt}
\caption{Convergence rates in the $L^{\infty}$-norm  for an open curve evolution
under the anisotropic surface diffusion flow (solid-state dewetting with anisotropic surface energies),
where the computational parameters are chosen as: $k=4,\beta=0.06,\phi=0, \sigma=\cos(5\pi/6)$.}
{\rule{\temptablewidth}{1pt}}
\begin{tabular*}{\temptablewidth}{@{\extracolsep{\fill}}lllll}
\multirow{2}{2cm}{$e_{h,\tau}(t)$} & $h=h_0$ & $h_0/2$ &$h_0/2^2$  &$h_0/2^3$  \\
 & $\tau=\tau_0$ & $\tau_0/2^2$ &$\tau_0/2^4$  &$\tau_0/2^6$  \\\hline
$e_{h,\tau}(t=0.5)$  & 3.91E-2 &1.73E-2 &7.52E-3& 3.40E-3\\
    order &--  &1.17 &1.20 &1.16\\ \hline
 $e_{h,\tau}(t=2.0)$ & 3.58E-2 & 1.73E-2 &7.71E-3 & 3.46E-3\\
    order &--  &1.05 &1.17 &1.15\\ \hline
   $e_{h,\tau}(t=5.0)$  & 2.75E-2 &1.39E-2 &6.61E-3& 3.10E-3\\
    order  &-- &0.98 &1.07 &1.09
 \end{tabular*}
{\rule{\temptablewidth}{1pt}}
\label{tb:order4}
\end{table}

Compared to the traditional explicit
finite difference method (e.g., marker-particle
methods) for computing the fourth-order geometric evolution PDEs~\cite{Wong00,Du10,Jiang15a}, which imposes
the extremely strong stability restriction on the time step, i.e., $\tau\sim\mathcal{O}(h^4)$, the proposed semi-implicit
PFEM can greatly alleviate the stability restriction and our numerical experiments indicate that
the time step only needs to be chosen as $\tau\sim\mathcal{O}(h^2)$ to maintain the numerical stability.
Of course, rigorous numerical analysis for these observations including convergence rates and stability condition
of PFEM is very important and  challenging, while its mathematical study is ongoing.

\section{For strongly anisotropic surface energies}

In this section, we first review the sharp-interface model obtained recently
by us for the solid-state dewetting with strongly anisotropic surface energies \cite{Jiang15c},
derive its variational formulation, present the PFEM and test numerical convergence order
of the proposed PFEM. We will adopt the same notations as those in the previous section.

\subsection{The (regularized) sharp-interface model}
For the strongly anisotropic surface energy case, i.e.,
$\gamma\in C^2[-\pi,\pi]$ and satisfies $\gamma(\theta)+\gamma^{\prime\prime}(\theta)<0$ for some
$\theta\in[-\pi,\pi]$, the sharp-interface model \eqref{eqn:weak1}-\eqref{eqn:weakBC3} will become
mathematically ill-posed. In this case, a regularization energy term, e.g.,
the Willmore energy regularization, will be added into the total interfacial energy as
\begin{equation}
W_\varepsilon:=W+\varepsilon^2 W_{\rm wm}=\int_{\Gamma}\gamma(\theta)\;ds+\frac{\varepsilon^2}{2}\int_{\Gamma}
\kappa^2\;ds-(x_c^r-x_c^l)\sigma,\quad  \
\label{eqn:strongenergy}
\end{equation}
with $0<\varepsilon\ll1$ a regularization parameter and $W_{\rm wm}=\frac{1}{2}\int_{\Gamma}
\kappa^2\;ds$ the Willmore energy.
Again, by calculating the variations with respect to
$\Gamma$ and the two contact points $x_c^l$ and $x_c^r$ (cf. Fig.~\ref{fig:model}), respectively,
the following (regularized) sharp-interface model has been obtained for the temporal evolution
of the open curve $\Gamma$ with applications in the solid-state dewetting with
strongly anisotropic surface energies~\cite{Jiang15b,Jiang15c}:
\begin{eqnarray}\label{eqn:strong1}
&&\partial_{t}\vec{X}=\partial_{ss}\mu \; \vec{n}, \qquad 0<s<L(t), \qquad t>0, \\
&&\mu=\left[\gamma(\theta)+\gamma^{\prime\prime}
(\theta)\right]\kappa-\varepsilon^{2}\left(\frac{\kappa^{3}}{2}+\partial_{ss}\kappa\right), \qquad
\kappa=-\left(\partial_{ss}\vec{X}\right)\cdot\vec{n};
\label{eqn:strong2}
\end{eqnarray}
together with the following boundary conditions:
\begin{itemize}
\item[(i)] contact point condition
\begin{equation}\label{seqn:BC1}
y(0,t)=0, \qquad y(L,t)=0, \qquad t\ge0,
\end{equation}
\item[(ii)] relaxed contact angle condition
\begin{eqnarray}\label{seqn:BC2a}
\frac{d x_c^l}{d t}=\eta f_\varepsilon(\theta_d^l;\sigma),\qquad
\frac{d x_c^r}{d t}=-\eta f_\varepsilon(\theta_d^r;\sigma),\qquad t\ge0,
\end{eqnarray}
\item[(iii)] zero-mass flux condition
\begin{equation}
\partial_s \mu(0,t)=0, \qquad \partial_s \mu(L,t)=0,\qquad t\ge0,
\label{seqn:BC3}
\end{equation}
\item[(iv)] zero-curvature condition
\begin{equation}
\kappa(0,t)=0, \qquad\kappa(L,t)=0, \qquad t\ge0;
\label{seqn:BC4}
\end{equation}
\end{itemize}
where $f_\varepsilon(\theta;\sigma):=\gamma(\theta)\cos\theta-\gamma\,'(\theta)
\sin\theta-\sigma-\varepsilon^{2}\partial_{s}\kappa \sin\theta$ for $\theta\in[-\pi,\pi]$,
which reduces to $f(\theta;\sigma)$ when $\varepsilon\to 0^+$.
The initial condition is taken as (\ref{init}).

Compared to the fourth-order partial differential equations (PDEs)
for isotropic/weakly anisotropic surface energies discussed in Section 2,
here the (regularized) sharp-interface model belongs to the sixth-order
dynamical evolution PDEs. Thus an additional boundary condition, i.e., the
zero-curvature boundary condition \eqref{seqn:BC4}, is introduced,
which has been rigorously obtained from the
variation of the total energy functional $W_\varepsilon$ \cite{Jiang15b,Jiang15c}.
In fact, this zero-curvature boundary condition may be interpreted as that the
curve tends to be faceted near the two contact points when the surface energy anisotropy
is strong. Again, it has been shown that the mass is conserved and
total energy is decreasing under the dynamics of the above (regularized)
sharp-interface model \eqref{eqn:strong1}-\eqref{seqn:BC4}~\cite{Jiang15c}.

\subsection{Variational formulation}

Similar to the isotropic/weakly anisotropic case from (\ref{eqn:weak1})-(\ref{eqn:weakBC3})
to obtain the variational problem (\ref{eqn:weq1})-(\ref{eqn:weq3}),
by using the integration by parts, we can obtain the variational problem
for the solid-state dewetting problem  with strongly
anisotropic surface energies, i.e., Eqs. \eqref{eqn:strong1}-\eqref{seqn:BC4}:
Given an initial curve $\Gamma(0)=\vec X(\rho,0)=\vec X_0(s)$ with $s=L_0\rho$
for $\rho \in I$, for any time $t \in (0,T]$, find the evolution curves
$\Gamma(t)=\vec X(\rho,t)\in H_{a,b}^1(I)\times H_{0}^{1}(I)$ with the
$x$-coordinate positions of moving contact points $a=x_c^l(t)\leq x_c^r(t)=b$,
the chemical potential $\mu(\rho,t)\in H^1(I)$, and the curvature $\kappa(\rho,t)\in H_0^1(I)$ such that
\begin{eqnarray}
&&\big<\partial_{t}\vec{X},~\varphi\vec{n}\big>_{\Gamma}+\big<\partial_{s}
\mu,~\partial_{s}\varphi\big>_{\Gamma}=0, \quad \forall\; \varphi\in H^{1}(I), \label{eqn:strweq1}\\
&&\big<\mu,~\psi\big>_{\Gamma}-\big<\big[\gamma(\theta)+\gamma\,''(\theta)\big]\kappa,~\psi\big>_{\Gamma}
+\frac{\varepsilon^2}{2}\big<\kappa^3,~\psi\big>_{\Gamma}
-\varepsilon^2\big<\partial_s\kappa,~\partial_s\psi\big>_{\Gamma}
=0, \quad \forall\; \psi\in H^{1}_0(I), \label{eqn:strweq2}\\
&&\big<\kappa\vec{n},~\boldsymbol{\omega}\big>_{\Gamma}-\big<\partial_{s}\vec{X},~\partial_{s}\boldsymbol{\omega}
\big>_{\Gamma}=0, \quad\forall\; \boldsymbol{\omega}\in H_{0}^{1}(I)\times H_{0}^{1}(I), \label{eqn:strweq3}
\end{eqnarray}
coupled with that the positions of the moving contact points, i.e., $x_c^l(t)$ and $x_c^r(t)$,
are updated by the boundary condition Eq.~\eqref{seqn:BC2a}.

Similar to the isotropic/weakly anisotropic case in the previous Section,
we can show that the weak solution of the variational problem \eqref{eqn:strweq1}-\eqref{eqn:strweq3}
preserves the mass during the evolution. Details are omitted here for brevity.

\begin{prop}[Mass conservation]
Assume that $(\vec X(\rho,t), \mu(\rho,t),\kappa(\rho,t))$ be a weak solution
of the variational problem \eqref{eqn:strweq1}-\eqref{eqn:strweq3}, then the total mass of the thin
film is conserved  during the evolution, i.e., \eqref{eqn:totalmassw} is valid.
\end{prop}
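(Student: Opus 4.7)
The plan is to repeat, essentially verbatim, the argument used to prove mass conservation in the isotropic/weakly anisotropic case. The crucial observation is that equation \eqref{eqn:strweq1} has exactly the same form as equation \eqref{eqn:weq1}, and the contact point conditions \eqref{seqn:BC1} coincide with \eqref{eqn:weakBC1}. Since the area functional $A(t)$ defined in \eqref{eqn:totalmass} depends only on the position vector $\vec X$ and not on $\mu$ or $\kappa$, none of the new ingredients of the strongly anisotropic model (the cubic curvature term, the $\partial_{ss}\kappa$ regularization in $\mu$, or the zero-curvature boundary condition \eqref{seqn:BC4}) will enter the computation of $\frac{d}{dt}A(t)$.

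First I would differentiate $A(t) = \int_0^{L(t)} y(s,t)\,\partial_s x(s,t)\,ds$ with respect to $t$, re-parametrize to the fixed interval $I = [0,1]$ to write $A(t) = \int_0^1 y(\rho,t)\,x_\rho(\rho,t)\,d\rho$, apply the product rule, and integrate by parts in $\rho$ on the mixed term $y\,x_{\rho t}$. The contact point condition \eqref{seqn:BC1}, namely $y(0,t) = y(1,t) = 0$, kills the boundary contribution. This reproduces the chain of identities in the first display of the proof of Proposition~\ref{Mass2-1} and yields
\begin{equation*}
\frac{d}{dt}A(t) = \int_0^1 (x_t, y_t) \cdot (-y_\rho, x_\rho)\,d\rho = \int_{\Gamma(t)} \vec X_t \cdot \vec n\,ds,
\end{equation*}
where I have used $\vec n = (-y_s, x_s)$ and $\partial_\rho s = |\partial_\rho \vec X|$.

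Next I would select the admissible test function $\varphi \equiv 1 \in H^1(I)$ in the variational identity \eqref{eqn:strweq1}. Since $\partial_s \varphi = 0$, the second term drops out entirely, and the first term equals $\int_{\Gamma(t)} \vec X_t \cdot \vec n\,ds$. Combining this with the expression just obtained for $\frac{d}{dt}A(t)$ gives $\frac{d}{dt}A(t) = 0$ for all $t \ge 0$, and then the desired conservation identity \eqref{eqn:totalmassw} follows immediately by integrating in time and invoking the initial condition \eqref{init}.

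There is no genuine obstacle in this argument: mass conservation is governed purely by the kinematic law $\partial_t \vec X = \partial_{ss}\mu\,\vec n$ tested against constants, and this relation is encoded identically in both \eqref{eqn:weq1} and \eqref{eqn:strweq1}. The regularization affects only the constitutive relation for $\mu$ through \eqref{eqn:strweq2} and enforces the extra boundary condition \eqref{seqn:BC4}, neither of which is visited. This structural inheritance is exactly why the authors are comfortable omitting the details for brevity.
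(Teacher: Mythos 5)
Your proposal is correct and is exactly the argument the paper intends: the authors omit the proof "for brevity" precisely because it is a verbatim repetition of the computation in Proposition~\ref{Mass2-1}, differentiating $A(t)$, integrating by parts with the contact point condition \eqref{seqn:BC1}, and testing \eqref{eqn:strweq1} with $\varphi\equiv 1$. Your observation that the regularization terms and the zero-curvature condition never enter is the right justification for why the isotropic/weakly anisotropic proof carries over unchanged.
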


\subsection{Parametric finite element approximation}

Take $\Gamma^0=\vec{X}^0\in \mathcal{V}^h_{x_0(0),x_0(L_0)} \times \mathcal{V}^h_0$
such that $\vec{X}^0(\rho_j)=\vec{X}_0(s_j^0)$ with $s_j^0=jL_0/N=L_0\rho_j$ for $j=0,1,\ldots,N$ and obtain
$\mu^{0}\in V^h$ and $\kappa^{0}\in V^h$ via the initial data \eqref{init} and \eqref{eqn:strong2},
then  a semi-implicit {\sl parametric finite element method (PFEM)} for the variational problem
\eqref{eqn:strweq1}-\eqref{eqn:strweq3} can be given as:
For $m\ge0$, first
update the two contact point positions $x_c^l(t_{m+1})$ and $x_c^r(t_{m+1})$ via
the relaxed contact angle condition \eqref{seqn:BC2a} by using the forward Euler method
and then
find $\Gamma^{m+1}=\vec{X}^{m+1}\in \mathcal{V}^h_{a,b}\times
\mathcal{V}^h_0$ with the $x$-coordinate positions of the
moving contact points $a:=x_c^l(t_{m+1})\leq b:=x_c^r(t_{m+1})$, $\mu^{m+1}\in V^h$ and
$\kappa^{m+1}\in \mathcal{V}^h_{0}$ such that
\bea\label{sf1}
&&\Big<\frac{\vec{X}^{m+1}-\vec{X}^{m}}{\tau_m},~\varphi_h\vec {n}^{m}\Big>_{\Gamma^{m}}^{h}+\big<\partial_{s}\mu^{m+1},~\partial_{s}\varphi_h\big>_{\Gamma^{m}}^{h}=0,
\quad\forall\;\varphi_h\in V^h,\\
\label{sf2}
&&\big<\mu^{m+1},~\psi_h\big>_{\Gamma^{m}}^{h}-\Big<\big[\widetilde{\gamma}(\theta^m)
-\frac{\varepsilon^2}{2}(\kappa^{m})^{2}\big]\kappa^{m+1},~\psi_h\Big>_{\Gamma^{m}}^{h}
-\varepsilon^{2}\big<\partial_{s}\kappa^{m+1},~\partial_{s}\psi_h\big>_{\Gamma^{m}}^{h}=0,
\quad \forall\;\psi_h\in \mathcal{V}^h_0,\qquad \\
\label{sf3}
&&\big<\kappa^{m+1}\vec{n}^{m},~\boldsymbol{\omega}_h\big>_{\Gamma^{m}}^{h}-\big<\partial_{s}\vec {X}^{m+1},~\partial_{s}\boldsymbol{\omega}_h\big>_{\Gamma^{m}}^{h}=0, \quad \forall\;\boldsymbol{\omega}_h\in \mathcal{V}^h_{0}\times \mathcal{V}^h_{0},
\eea
where $\widetilde{\gamma}(\theta^m)=\gamma(\theta^m)+\gamma ''(\theta^m)$.
Again due to that the variational problem (\ref{sf1})-(\ref{sf3}) is a mixed type,
some kind of inf-sup condition of the finite element spaces $V^h$ and $\mathcal{V}^h_0$ need to be proved or
required if one wants to establish its well-posedness and error estimates.

It should be noted that in the numerical scheme the mesh points tend to distribute
equally as the time evolves, just as the numerical scheme for weakly anisotropic cases.
However, when the degree of anisotropy $\beta$ becomes stronger and
stronger, the curve will form sharper and sharper corners. During the practical numerical simulations for
strongly anisotropic cases, we have observed that mesh-distribution function $\Psi^m$ could
become very large, even bigger than one hundred, especially during the initial very short time.
In this case, the excessive uneven distribution mesh will contaminate the
numerical scheme and sometimes even make it become unstable. In order to overcome the problem,
in the numerical simulations for strongly anisotropic cases, we need to
redistribute the mesh points when the strength parameter $\Psi^m$ is larger than a prescribed critical value.
The mesh redistribution procedure is as follows: (i) given the coordinates $(x,y)$ of the mesh points at the time,
a piecewise linear curve can be obtained by using linear fittings for every two consecutive points; (ii)
we can obtain the arc length between any two consecutive points, and therefore the total arc length;
 and (iii) according to these arc lengths and linear polynomials,
 we redistribute these mesh points at evenly spaced arc lengths.
In addition, when the regularization parameter $\varepsilon$ becomes small,
in general, the mesh size and time step in the numerical scheme should accordingly become
small to ensure the numerical scheme to be stable and to resolve the small regularized corners.

\subsection{Numerical convergence test}

In this section, we investigate some numerical convergence results by using the proposed PFEM, i.e., Eqs.~\eqref{sf1}-\eqref{sf3}, for solving these problems when the surface energy is strongly anisotropic.
In order to test the convergence order of the PFEM for the strongly anisotropy case, the initial shape
of the thin film is chosen as the same as that in these isotropic or weakly anisotropic cases presented
in Section 2.4. For the closed curve evolution, the computational parameters are chosen as $h_0=(8+\pi)/120$ and $\tau_0=h_0^2/20$, and the boundary conditions are periodic; for the open curve evolution, the parameters are chosen as $h_0=0.1$ and $\tau_0=h_0^2/20$, and the boundary conditions are given by Eqs.~\eqref{seqn:BC1}-\eqref{seqn:BC4}.

We report here numerical convergence order results for the closed and open curve evolution under three
different times $t=0.5$, $2.0$ and $5.0$. As shown in Tables~\ref{tb:order5}-\ref{tb:order6},
the numerical convergence order results of the proposed PFEM under strongly anisotropic cases are similar
to those under weakly anisotropic cases, i.e., the convergence order can still attain
about between $1.5$ and $1.8$ for closed curve evolution;
and for open curve evolution, the convergence order decreases a little bit, but still can attain almost first order.
We note here that in the strongly anisotropic case if the degree of the anisotropy $\beta$ becomes larger and larger, the interface curve will form sharper and sharper corners. Under these circumstances, we usually
need to redistribute mesh points if the mesh distribution function $\Psi(t)$ is larger than a
given critical value. Based on our numerical experiments,
these mesh redistribution steps might pollute a little bit of the convergence order of the numerical scheme.

\begin{table}[H]
\def\temptablewidth{1\textwidth}
\vspace{-12pt}
\caption{Convergence rates in the $L^{\infty}$-norm for a closed curve evolution
about the surface diffusion problem with the strongly anisotropic surface energy:
$k=4,\beta=0.1,\phi=0$, where the regularization parameter: $\varepsilon=0.1$.}
{\rule{\temptablewidth}{1pt}}
\begin{tabular*}{\temptablewidth}{@{\extracolsep{\fill}}lllll}
\multirow{2}{2cm}{$e_{h,\tau}(t)$} & $h=h_0$ & $h_0/2$ &$h_0/2^2$  &$h_0/2^3$  \\
 & $\tau=\tau_0$ & $\tau_0/2^2$ &$\tau_0/2^4$  &$\tau_0/2^6$  \\\hline
$e_{h,\tau}(t=0.5)$  &  3.61E-2 & 1.20E-2 &3.27E-3 &8.91E-4\\
    order &--    &1.59 & 1.88 & 1.87 \\ \hline
 $e_{h,\tau}(t=2.0)$  & 8.25E-3 &  2.49E-3  &   7.21E-4 & 2.31E-4 \\
    order &--    &1.73 &1.79 & 1.64 \\ \hline
   $e_{h,\tau}(t=5.0)$  & 5.15E-3 &  1.59E-3 & 4.71E-4 & 1.54E-4 \\
    order  &--    & 1.69 & 1.76 & 1.61
 \end{tabular*}
{\rule{\temptablewidth}{1pt}}
\label{tb:order5}
\end{table}

\begin{table}[H]
\def\temptablewidth{1\textwidth}
\vspace{-12pt}
\caption{Convergence rates in the $L^{\infty}$-norm for an open curve evolution
about the solid-state dewetting problem with the strongly anisotropic surface energy:
$k=4,\beta=0.1,\phi=0$, where $\varepsilon=0.1, \sigma=\cos(3\pi/4)$.}
{\rule{\temptablewidth}{1pt}}
\begin{tabular*}{\temptablewidth}{@{\extracolsep{\fill}}lllll}
\multirow{2}{2cm}{$e_{h,\tau}(t)$} & $h=h_0$ & $h_0/2$ &$h_0/2^2$  &$h_0/2^3$  \\
 & $\tau=\tau_0$ & $\tau_0/2^2$ &$\tau_0/2^4$  &$\tau_0/2^6$  \\\hline
$e_{h,\tau}(t=0.5)$  & 4.89E-2 &1.76E-2 &8.12E-3& 3.81E-3\\
    order &--  &1.47 &1.12 &1.09\\ \hline
 $e_{h,\tau}(t=2.0)$ & 3.36E-2 & 1.75E-2 &7.85E-3 & 3.61E-3\\
    order &--  &0.94 &1.16 &1.12\\ \hline
   $e_{h,\tau}(t=5.0)$  & 1.75E-2 &1.15E-2 &5.80E-3& 2.85E-3\\
    order  &-- & 0.60 & 0.99 & 1.03
 \end{tabular*}
{\rule{\temptablewidth}{1pt}}
\label{tb:order6}
\end{table}

Furthermore, different from weakly anisotropic cases, the governing equations for strongly anisotropic cases introduce a new parameter $\varepsilon$ to regularize the problem to be well-posed. Theoretically, in order to obtain correct
asymptotic solutions, we should make $\varepsilon$ be chosen as small as possible during simulations; however,
practically very small parameters $\varepsilon$ can bring severe numerical stability constraints
and high resolution requirement in space, and thus cause
very small mesh sizes and very high computational costs. So we must balance both the factors in numerical simulations.

\begin{figure}[H]
\centering
\includegraphics[width=16cm,angle=0]{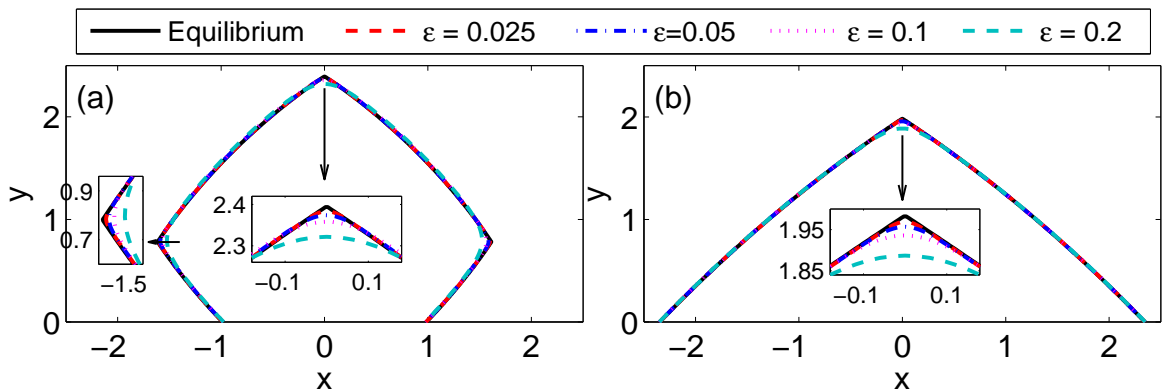}
\caption{Comparison of the numerical equilibrium shapes of thin island film with its theoretical equilibrium
shape for several values of regularization parameter $\varepsilon$,
where the solid black lines represent the theoretical equilibrium
shapes and colored lines represent the numerical equilibrium shapes,
and the parameters are chosen as
(a)~$k=4,\beta=0.2,\phi=0,\sigma=\cos(2\pi/3)$;
(b)~$k=4,\beta=0.2,\phi=0,\sigma=\cos(\pi/3)$.}
\label{fig:compstrong}
\end{figure}

Fig.~\ref{fig:compstrong} shows the numerical equilibrium shapes of a strongly anisotropic island film for different regularization parameters $\varepsilon$ under the parameters $k=4,\beta=0.2,\phi=0,\sigma=\cos(2\pi/3)$ (Fig.~\ref{fig:compstrong}(a)) and $k=4,\beta=0.2,\phi=0,\sigma=\cos(\pi/3)$ (Fig.~\ref{fig:compstrong}(b)). Initially, the shape of thin island film is a rectangle with length $5$ and height $1$, and we let it evolve into its equilibrium shape. Then, we compare the numerical equilibrium shapes as a function of different parameters $\varepsilon$ with the theoretical equilibrium shape (shown by the solid black lines in Fig.~\ref{fig:compstrong}). As clearly shown in Fig.~\ref{fig:compstrong}, the numerical equilibrium shapes converge to the theoretical equilibrium shapes (constructed by the Winterbottom construction~\cite{Jiang15b,Jiang15c}) with decreasing the small parameter $\varepsilon$ from $\varepsilon=0.2$ to $\varepsilon=0.025$ in the two different cases.

\section{Numerical results \& discussion}

Based on the mathematical models and numerical methods presented above, we will present the numerical results in this section from simulating solid-state dewetting in several different thin-film geometries with weakly or strongly anisotropic surface energies in 2D. For simplicity, we set the initial thin film thickness to unity in the following simulations. The contact line mobility $\eta$ determines the relaxation rate of the dynamical contact angle to the equilibrium contact angle, and in principle, it is a material parameter and should be determined either from physical experiments or microscopic (e.g., molecular dynamical) simulations. In this paper, we will always choose the contact line mobility as $\eta=100$ in numerical simulations, and the detailed discussion about its influence to solid-state dewetting evolution process can be found in the reference~\cite{Jiang15a}.

\subsection{Weakly anisotropic surface energies}
By performing numerical simulations, we now examine the evolution of island thin films on a flat substrate with different degrees of anisotropy and $k$-fold crystalline symmetries. The evolution of small, initially rectangular islands (with length $5$ and thickness $1$, shown in red) towards their equilibrium shapes is shown in Fig.~\ref{fig:weakmbeta} for several different anisotropy strengths $\beta$ and $k$-fold crystalline symmetries under the fixed parameters $\phi=0,\sigma=\cos(3\pi/4)$.

\begin{figure}[htpb]
\centering
\includegraphics[width=16cm,angle=0]{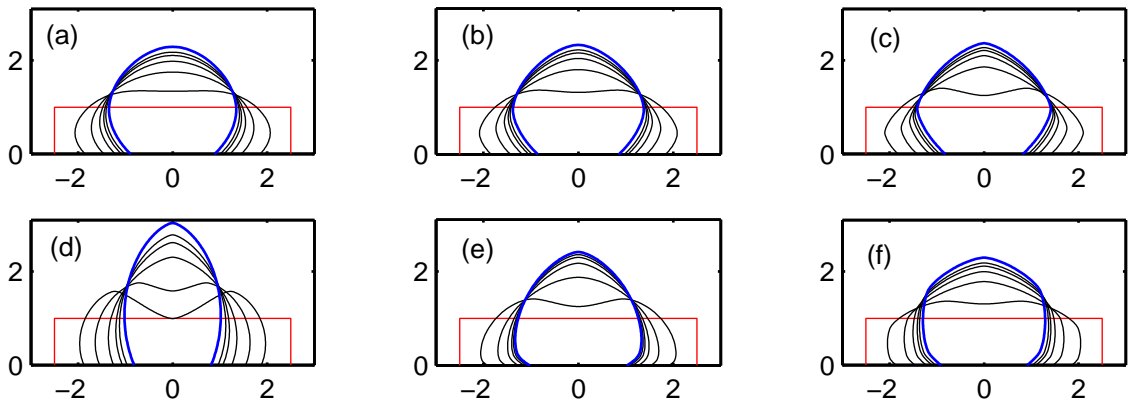}
\caption{Several steps in the evolution of small,
initially rectangular islands (shown in red) toward their equilibrium shapes (shown in blue) for different degrees of the anisotropy $\beta$ and crystalline symmetry orders $k$ (where $\phi=0,\sigma=\cos(3\pi/4)$ in all cases).
Figures (a)-(c) are results for $\beta=0.02,0.04,0.06$ ($k=4$ is fixed), and
Figures (d)-(f) are simulation results for (d) $k=2,\beta=0.32$,
(e) $k=3,\beta=0.1$, and (f) $k=6,\beta=0.022$, respectively.}
\label{fig:weakmbeta}
\end{figure}

As clearly observed from Fig.~\ref{fig:weakmbeta}(a)-(c), when the strength of anisotropy increases from $0.02$ to $0.06$, the equilibrium shape (shown in blue) changes from smooth and nearly circular to an increasingly anisotropic
shape with increasingly sharp corners, as expected based on theoretical predictions. On the other side, when the rotational symmetry $k$ increases, we can observe that the number of ``facets'' in the equilibrium shape also increases
(see Fig.~\ref{fig:weakmbeta}(d)-(f)).

\begin{figure}[htpb]
\centering
\includegraphics[width=14cm,angle=0]{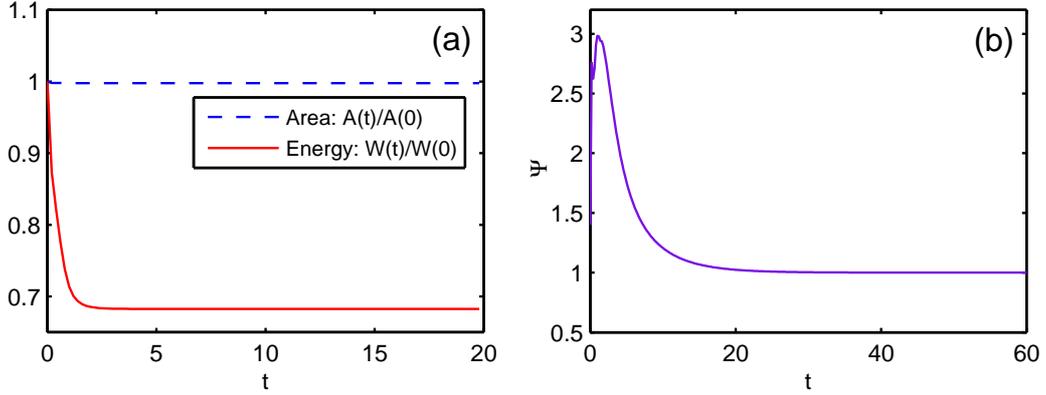}
\caption{(a)~The temporal evolution of the normalized total free energy (defined in Eq.~\eqref{eqn:totalmass})
and the normalized area (mass); (b)~the temporal evolution of the mesh distribution function $\Psi(t)$.
The computational parameters are chosen as the same as Fig.~\ref{fig:weakmbeta}(c).}
\label{fig:weakenergy}
\end{figure}

Fig.~\ref{fig:weakenergy}(a) depicts the temporal evolution of the normalized total free energy $W(t)/W(0)$ (where $W(t)$ is defined in Eq.~\eqref{eqn:totalmass}) and normalized island area
$A(t)/A(0)$ in the weakly anisotropic case, and it clearly demonstrates that the total energy of the system decays monotonically and that the island area is conserved during the entire simulation. In the meanwhile, Fig.~\ref{fig:weakenergy}(b) depicts the temporal evolution of the mesh distribution function $\Psi(t)$ in the same case, and we can see that in an instant the function increases fast to a critical number (which is small and no more than 3), then gradually decreases in a very long time, and finally converges to $1$ (i.e., meaning that the mesh is equally distributed). This clearly demonstrates from numerical simulations that the proposed PFEM has the long-time mesh equidistribution property.

\begin{figure}[htpb]
\centering
\includegraphics[width=14cm,angle=0]{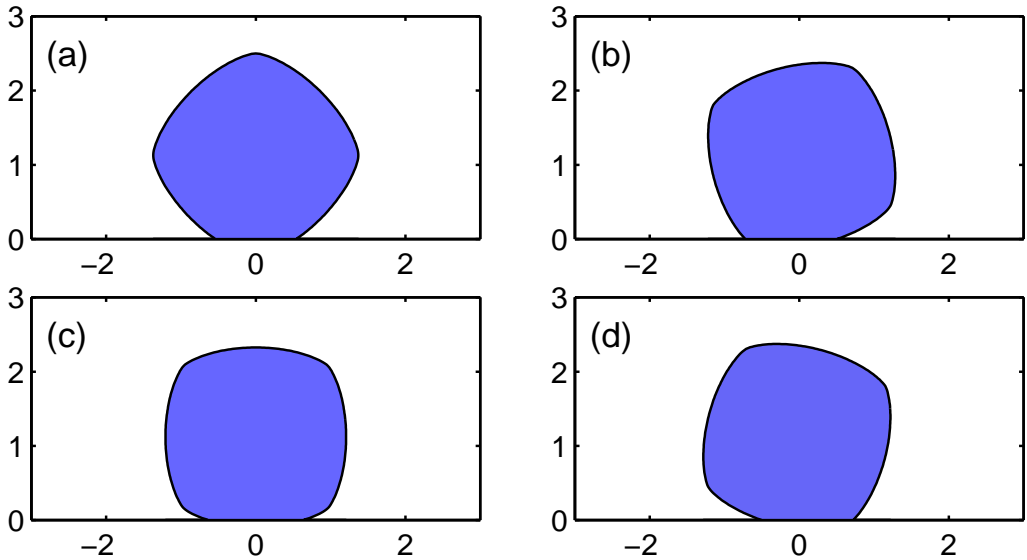}
\caption{Equilibrium island morphologies for a small, initially rectangular island film under several different
crystalline rotations $\phi$ (phase shifts):(a)~$\phi=0$; (b)~$\phi=\pi/6$; (c)~$\phi=\pi/4$; (d)~$\phi=\pi/3$.
The other computational parameters are chosen as: $k=4,\beta=0.06,\sigma=\cos(5\pi/6)$.}
\label{fig:weakrotation}
\end{figure}

To observe rotation effects of the crystalline axis of the island relative to the substrate normal,
we also performed numerical simulations of the evolution of small islands with different phase shifts $\phi$
for the weakly anisotropic cases under the computational parameters: $k=4,\beta=0.06,\sigma=\cos(5\pi/6)$.
As shown in Fig.~\ref{fig:weakrotation}, the asymmetry of the equilibrium shapes is clearly observed, which
can be explained as breaking the symmetry of the surface energy anisotropy (defined in Eq.~\eqref{eqn:mfold})
with respect to the substrate normal.

\begin{figure}[htpb]
\centering
\includegraphics[width=14cm,angle=0]{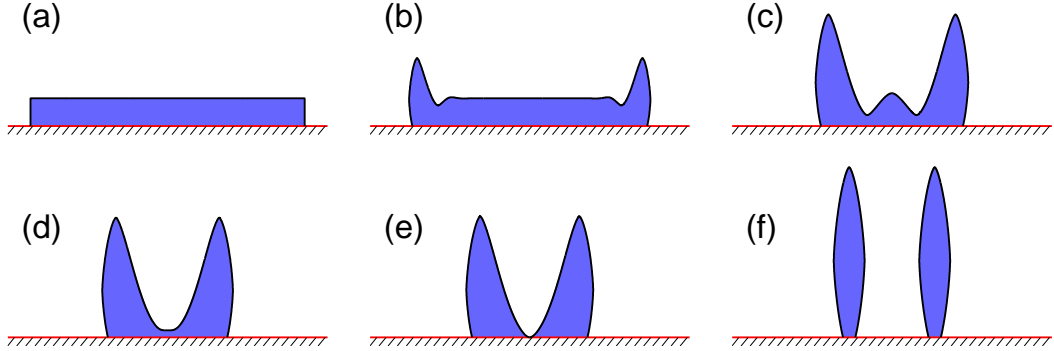}
\caption{Several snapshots in the evolution of a long, thin island film (aspect ratio of $60$) with weakly
anisotropic surface energy ($k=4,\beta=0.06,\phi=0$) and the material parameter $\sigma=\cos(5\pi/6)$:(a)~$t=0$; (b)~$t=15$; (c)~$t=241$; (d)~$t=350$; (e)~$t=371$; (f)~$t=711$. Note the difference in vertical and horizontal scales.}
\label{fig:weaklarge}
\end{figure}

\begin{figure}[htpb]
\centering
\includegraphics[width=14cm,angle=0]{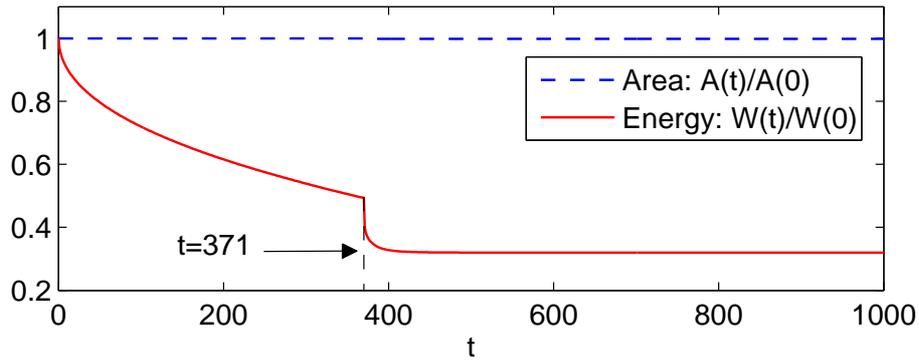}
\caption{The corresponding temporal evolution in Fig.~\ref{fig:weaklarge} for the normalized total
free energy and the normalized area (mass).}
\label{fig:weaklongenergy}
\end{figure}

As noted in the papers~\cite{Dornel06,Jiang12,Jiang15a}, when the aspect ratios of thin island films are larger than a critical value, the islands will pinch-off. Fig.~\ref{fig:weaklarge} depicts the temporal evolution of a very large island film (aspect ratio of $60$) with weakly anisotropic surface energy. As shown in Fig.~\ref{fig:weaklarge},
solid-state dewetting very quickly leads to the formation of ridges at the island edges followed by valleys.
As time evolves, the ridges and valleys become increasingly exaggerated, then the two valleys merge near the island center. At the time $t=371$, the valley at the center of the island hits the substrate, leading to a pinch-off
event that separates the initial island into a pair of islands. Finally, the two separated islands continue to evolve until they reach their equilibrium shapes. The corresponding evolution of the normalized total free energy and normalized
total area (mass) are shown in Fig.~\ref{fig:weaklongenergy}. An interesting phenomenon here is that the total energy
undergoes a sharp drop at $t=371$, the moment when the pinch-off event occurs.
In order to obtain a qualitative comparison with other methods, we choose the same
computational parameters as in the paper~\cite{Jiang15a}. The pinch-off time $t=371$ we obtained by using PFEM for this example is very close to the result $t=374$ by using marker-particle methods in~\cite{Jiang15a}. But under the same computational resource, the computational time by using PFEM for this example is about several hours, while it is about two weeks by using marker-particle methods~\cite{Jiang15a}. Note here that once the interface curve hits the substrate somewhere in the simulation, it means that a pinch-off event has happened and a new contact point is generated, then after the pinch-off, we compute each part of the pinch-off curve separately.

\subsection{Strongly anisotropic surface energies}


\begin{figure}[htpb]
\centering
\includegraphics[width=16cm,angle=0]{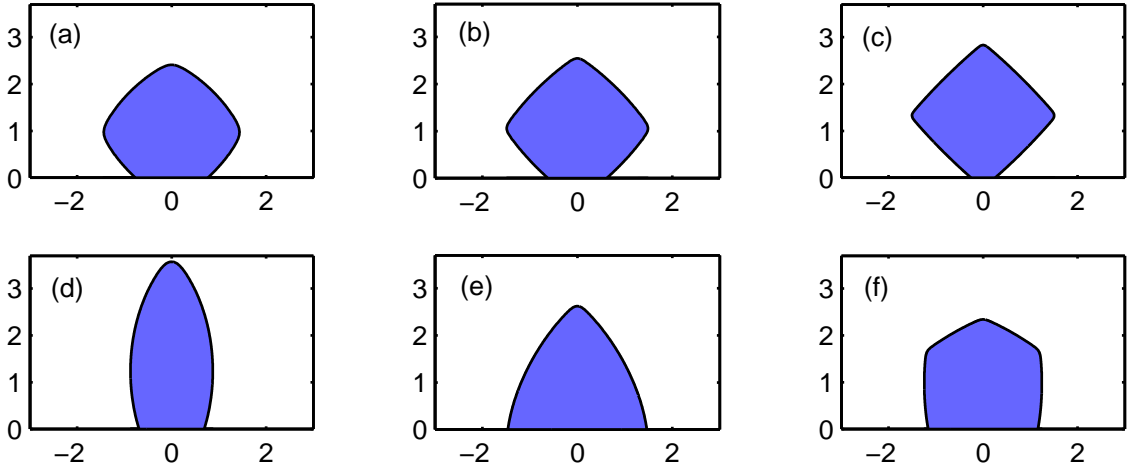}
\caption{Equilibrium island morphologies for a small, initially rectangular island film under several different
crystalline fold of symmetry $k$ and degree of anisotropy $\beta$: (a)-(c)~$k=4$ is fixed and the degree of anisotropy changes from $\beta=0.1,0.2,0.4$, respectively; (d)~$k=2,\beta=0.5$; (e)~$k=3,\beta=0.3$; (f)~$k=6,\beta=0.1$.
The other computational parameters are chosen as: $\phi=0,\sigma=\cos(3\pi/4),\varepsilon=0.1$.}
\label{fig:equistr}
\end{figure}

In order to compare with weakly anisotropic cases, we also performed numerical simulations for the evolution of small island films with strongly anisotropic surface energies under different degrees of anisotropy and $k$-fold crystalline
symmetries. The numerical equilibrium shapes of small, initially rectangular islands (with length $5$ and thickness $1$) are shown in Fig.~\ref{fig:equistr} for several different anisotropy strengths $\beta$ and $k$-fold crystalline symmetries under the fixed parameters $\phi=0,\sigma=\cos(3\pi/4),\varepsilon=0.1$. As clearly observed from Fig.~\ref{fig:equistr}(a)-(c), when the strength of anisotropy increases to $\beta=0.4$
(where $k=4$ is fixed), the equilibrium shape becomes almost perfectly ``faceting'', and we can observe visually that the apparent missing orientations occur at its sharp corners. This conclusion is also valid for other values of
rotational symmetries (see Fig.~\ref{fig:equistr}(d)-(f)).

\begin{figure}[htpb]
\centering
\includegraphics[width=15cm,angle=0]{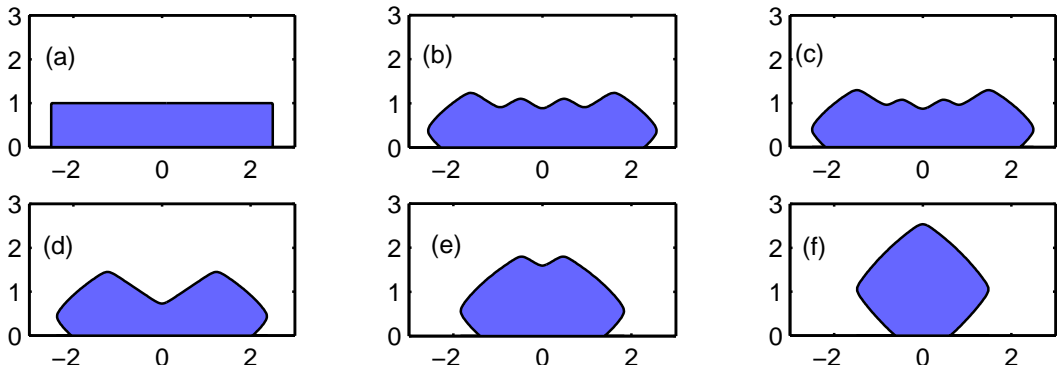}
\caption{Several snapshots in the evolution of a small, initially rectangular island film toward its equilibrium shape:
(a)~t=0; (b)~t=0.05; (c)~t=0.1; (d)~t=0.2; (e)~t=1.0; (f)~t=20.0. The computational parameters are chosen as: $k=4,\beta=0.2,\phi=0,\sigma=\cos(3\pi/4),\varepsilon=0.1$.}
\label{fig:strongshort(a)}
\end{figure}

\begin{figure}[htpb]
\centering
\includegraphics[width=16cm,angle=0]{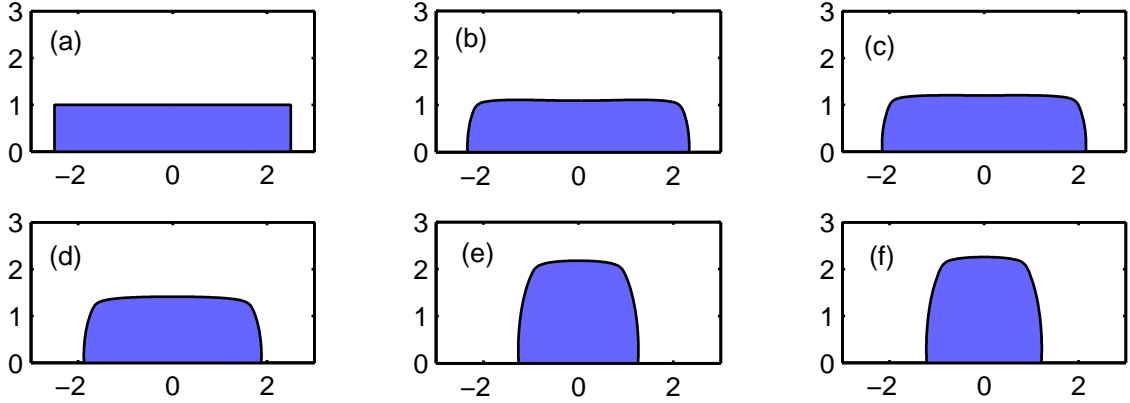}
\caption{The parameters are chosen as the same as Fig.~\ref{fig:strongshort(a)}, except for the phase shift angle $\phi=\pi/4$.}
\label{fig:strongshort(b)}
\end{figure}

By performing numerical simulations, as shown in Figs.~\ref{fig:strongshort(a)} and \ref{fig:strongshort(b)}, we also examine the dynamical evolution process of small island films with different types of strongly anisotropic surface energies. The only difference between the two examples lies in that the phase shift is chosen as $\phi=0$ in Fig.~\ref{fig:strongshort(a)}, while as $\phi=\pi/4$ in Fig.~\ref{fig:strongshort(b)}. By simple calculations for Eq.~\eqref{eqn:mfold}, we can obtain that if $\phi=0$, the surface energy $\gamma(\theta)$ attains the minimum at the orientations $\theta=\pm\pi/4, \pm3\pi/4$; if $\phi=\pi/4$, then it attains the minimum at the orientations $\theta=0, \pm \pi/2$. Reflecting from the dynamical evolution, we can clearly observe that in Fig.~\ref{fig:strongshort(a)}, the
wavy (or ``saw-tooth'') structure forms along the orientations $\theta=\pm \pi/4$ during the evolution, and
as the time evolves, the ridge will grow while the valley will sink, and if the island film is long enough, the valley
may eventually hit the substrate and cause the pinch-off phenomenon (see Fig.~\ref{fig:longstrong} for the evolution of a long island film); but in Fig.~\ref{fig:strongshort(b)}, we can observe that no wavy structure appears, and the facets are always along the orientation $\theta=0$. This can explain that during real physical experiments, some thin film materials (e.g., fully
faceted single crystal Si films~\cite{Dornel06,Zucker13,Bussman11}) in solid-state dewetting may not form valley structures.

\begin{figure}[htpb]
\centering
\includegraphics[width=16cm,angle=0]{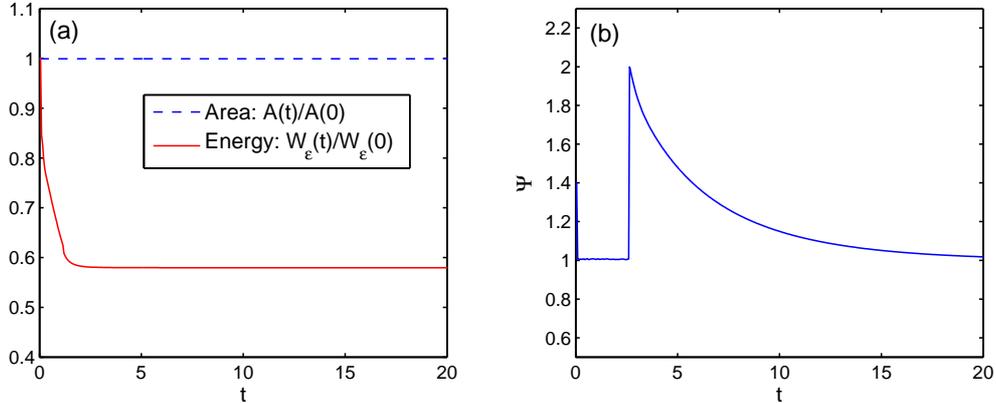}
\caption{(a)~The temporal evolution of the normalized (regularized) total free energy (defined in Eq.~\eqref{eqn:strongenergy}) and the normalized area (mass); (b)~the temporal evolution of the mesh distribution function $\Psi(t)$. Note here that we need to redistribute equally the mesh when the function
$\Psi(t)$ is larger than a prescribed value (here choose the value as $2$) in the strongly anisotropic case. The computational parameters are chosen as the same as Fig.~\ref{fig:strongshort(a)}.}
\label{fig:strongenergy}
\end{figure}

On the other hand, we also simulate the temporal evolution of the normalized total energy $W_{\varepsilon}(t)/W_{\varepsilon}(0)$ (where $W_{\varepsilon}(t)$ is defined in Eq.~\eqref{eqn:strongenergy}) and normalized island area $A(t)/A(0)$
in the strongly anisotropic case. As shown in Fig.~\ref{fig:strongenergy}(a), it clearly offers a numerical validation that the regularized total free energy $W_{\varepsilon}$ decreases monotonically at all times and that the total island area (or mass) is always conserved during the evolution of thin films. Fig.~\ref{fig:strongenergy}(b) depicts the temporal evolution of the mesh
distribution function $\Psi(t)$. Different from weakly anisotropic cases, because of the strong anisotropy, faceting structures and sharp corners may form, and at the beginning of evolution, mesh points may quickly gather together near the sharp corners, and the excessive uneven distribution of mesh points will deteriorate the numerical stability of the scheme. Therefore, in strongly anisotropic cases, we need to redistribute the mesh points
once $\Psi(t)$ is larger than a given value (here we choose this value as $2$). As a matter of fact, as reflected by
Fig.~\ref{fig:strongenergy}(b), the mesh redistribution is only needed at the beginning of evolution, and after a period of time, $\Psi(t)$ will decrease monotonically and finally converge to one.

\begin{figure}[htpb]
\centering
\includegraphics[width=14cm,angle=0]{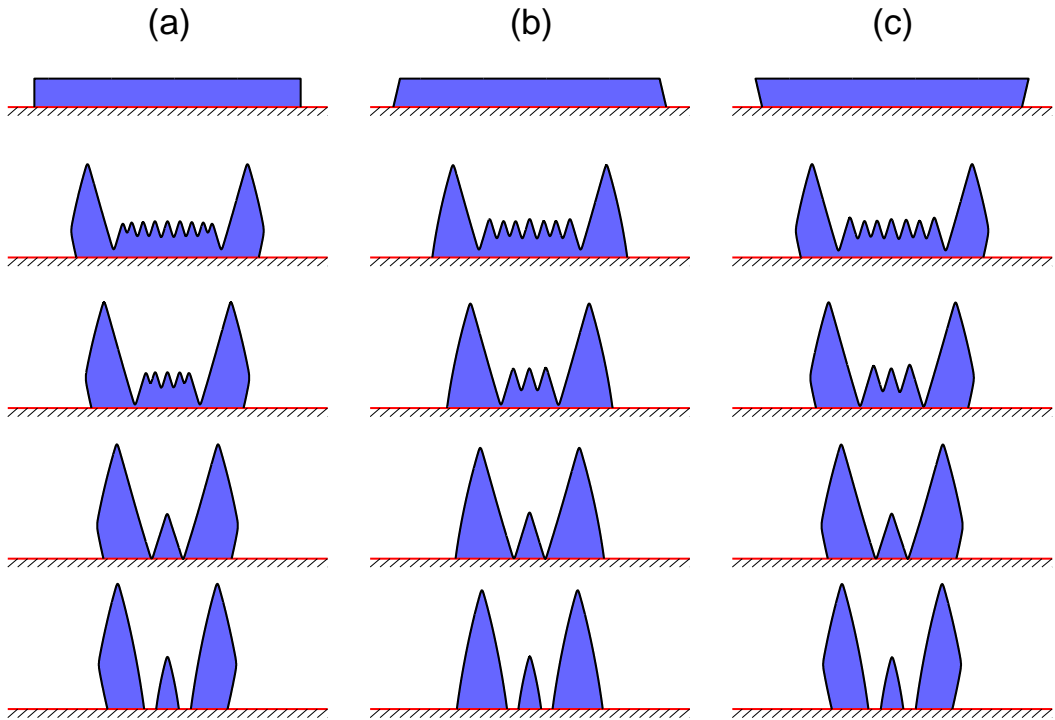}
\caption{Morphological evolution of a long, thin island film (aspect ratio of $60$) with strong anisotropic
surface energy ($k=4,\beta=0.3,\phi=0$) under three different initial shapes.
The other computational parameters are chosen as:
$\sigma=\cos(2\pi/3),\varepsilon=0.2$. Note the difference in vertical and horizontal scales.}
\label{fig:longstrong}
\end{figure}


We also examine the evolution of very large island films with strongly anisotropic surface energies. As noted in
the papers~\cite{Jiang15b,Jiang15c}, when the surface energy anisotropy is strong, multiple equilibrium shapes may appear for
thin films with the same enclosed area (mass) but with different initial shapes. As shown in Fig.~\ref{fig:longstrong}, we simulate the morphology evolution of long island films initially with the same enclosed area $60$ but with different initial shapes: (a) rectangle; (b) upper trapezoidal; (c) down trapezoidal. Because the aspect ratios of three islands are very large, the pinch-off events will happen in these three cases. We can clearly observe that, wavy structures appear during the evolution of these three cases, and eventually the initial long islands are split into three small parts. The interesting finding is that no matter what the initial shape is, the equilibrium shape of the center part which is detached from the long island films always takes the similar shape and the same equilibrium contact angle (see the bottom row in Fig.~\ref{fig:longstrong}).

\section{Extensions to non-smooth and/or ``cusped'' surface energies}
In this section, we will discuss how to deal with the case when the surface energy $\gamma(\theta)$ is
non-smooth and/or ``cusped'' and present some numerical results.

\subsection{Smoothing the surface energy}
In the application of materials science, the surface energy $\gamma(\theta)$
is usually piecewise smooth and it has only finite non-smooth and/or ``cusped'' points.
Two typical examples are given as below~\cite{Peng98}:
\bea \label{eqn:cusp1}
&&\gamma(\theta)=\sum_{i=1}^n|\sin(\theta-\alpha_i)|, \qquad \theta\in [-\pi,\pi],\\
\label{eqn:cusp2}
&&\gamma(\theta)=1+\beta \cos[k(\theta+\phi)]+\sum_{i=1}^n|\sin(\theta-\alpha_i)|, \quad \theta\in [-\pi,\pi],
\eea
where $n$ is a positive integer, $\beta\ge0$ is a constant, $k$ is a positive integer,
$\phi\in[0,\pi]$ and $\alpha_i\in[0,\pi]$ for $i=1,2,\ldots,n$ are given constants.

In this scenario, one can first smooth the surface energy $\gamma(\theta)$
by a $C^2$-smooth function $\gamma_\delta(\theta)$ with $0<\delta\ll1$ a smoothing parameter
such that $\gamma_\delta(\theta)$ converges to $\gamma(\theta)$ uniformly for
$\theta\in[-\pi,\pi]$ when $\delta\to 0^+$. For the above two examples,
they can be smoothed as
\bea \label{eqn:smooth1}
&&\gamma_\delta(\theta)=\sum_{i=1}^n\sqrt{\delta^2+(1-\delta^2)\sin^2(\theta-\alpha_i)}, \qquad \theta\in [-\pi,\pi],\\
\label{eqn:smooth2}
&&\gamma_\delta(\theta)=1+\beta \cos[k(\theta+\phi)]+\sum_{i=1}^n\sqrt{\delta^2+(1-\delta^2)\sin^2(\theta-\alpha_i)}, \quad \theta\in [-\pi,\pi],
\eea
where $0<\delta\ll 1$ is the smoothing parameter.

For the smoothed surface energy $\gamma_\delta(\theta)$, if it is weakly anisotropic, i.e.,
$\gamma_\delta(\theta)+\gamma_\delta^{\prime\prime}(\theta)>0$ for all $\theta\in[-\pi,\pi]$,
we can use the sharp-interface model and the PFEM presented in Section 2 by replacing
$\gamma(\theta)$ with $\gamma_\delta(\theta)$; on the other hand, if it is strongly anisotropic,
i.e., $\gamma_\delta(\theta)+\gamma_\delta^{\prime\prime}(\theta)<0$ for some
$\theta\in[-\pi,\pi]$, then we need to use the (regularized) sharp-interface model with
a regularization parameter $0<\varepsilon\ll 1$ and
the PFEM presented in Section 3 by replacing
$\gamma(\theta)$ with $\gamma_\delta(\theta)$.
For the above two examples, it is easy to show that the
smoothed surface energy $\gamma_\delta(\theta)$ in
\eqref{eqn:smooth1} is weakly anisotropic when $\delta>0$;
on the other hand, the
smoothed surface energy $\gamma_\delta(\theta)$ in
\eqref{eqn:smooth2} is strongly anisotropic when $\beta$ is chosen
to be large enough.

\subsection{Model convergence test and numerical results}

Fig.~\ref{fig:weakcusp} depicts the convergence result of the numerical equilibrium shapes to its theoretical equilibrium shape for an initially rectangle island film of length $5$ and thickness $1$ with the above ``cusped'' surface energy~\eqref{eqn:cusp1} under different values of the smoothing parameter $\delta$. As we know, for the above surface energy \eqref{eqn:cusp1} with cusps ($n=2,\alpha_1=0,\alpha_2=\pi/2$), its Wulff shape~\cite{Peng98,Wulff1901} is a square with complete ``faceting'' on its four edges, and then its Winterbottom shape (i.e., its theoretical equilibrium shape, shown by the solid black line in Fig.~\ref{fig:weakcusp}) can be directly constructed by using the flat substrate to truncate its Wulff shape~\cite{Winterbottom67,Zucker12}. As shown in Fig.~\ref{fig:weakcusp}, we can clearly observe that the numerical equilibrium shapes (shown in colors) gradually uniformly converges to its theoretical equilibrium shape (complete facets)
when the smoothing parameter $\delta$ decreases from $0.2$ to $0.05$.

\begin{figure}[htpb]
\centering
\includegraphics[width=12cm,angle=0]{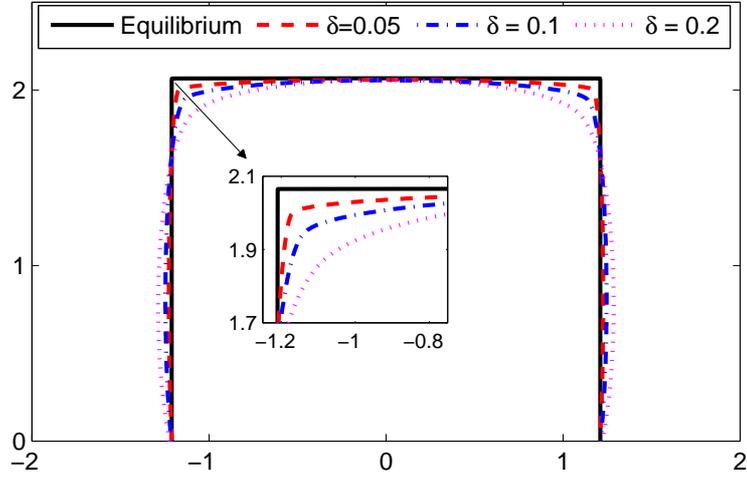}
\caption{Convergence results of the numerical equilibrium shapes of thin island film with the ``cusped'' surface
energy~\eqref{eqn:cusp1} to its theoretical equilibrium shape for several values of the smoothing parameter $\delta$, where the solid black line represents the theoretical equilibrium
shapes and colored lines represent the numerical equilibrium shapes, and
the computational parameters are chosen as $n=2,\alpha_1=0,\alpha_2=\pi/2,\sigma=\cos(3\pi/4)$.}
\label{fig:weakcusp}
\end{figure}

\begin{figure}[htpb]
\centering
\includegraphics[width=12cm,angle=0]{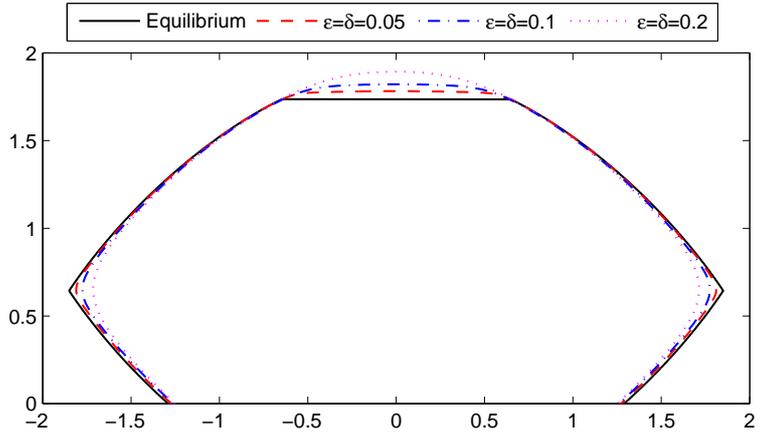}
\caption{Convergence results of the numerical equilibrium shapes of thin island film with the ``cusped'' surface
energy~\eqref{eqn:cusp2} to its theoretical equilibrium shape for several values of the smoothing parameter $\delta$, where the solid black lines represent the theoretical equilibrium
shapes and colored lines represent the numerical equilibrium shapes, and the computational parameters are chosen as
$k=4,\beta=0.2,\phi=0,n=1,\alpha_1=0,\sigma=\cos(3\pi/4)$.}
\label{fig:strongcusp}
\end{figure}

Fig.~\ref{fig:strongcusp} depicts the convergence result of the numerical equilibrium shapes to its theoretical equilibrium shape for an initially rectangle island film of length $5$ and thickness $1$ with the above ``cusped'' surface energy~\eqref{eqn:cusp2} ($k=4,\beta=0.2,\phi=0,n=1,\alpha_1=0$) under different values of the smoothing parameter $\delta$ and regularization parameter $\varepsilon$. The theoretical equilibrium shape for the above surface energy can be predicted by the generalized Winterbottom construction presented in~\cite{Jiang15b,Jiang15c}, represented by the solid black line in Fig.~\ref{fig:strongcusp}. As shown in Fig.~\ref{fig:strongcusp}, we can clearly observe that the numerical equilibrium shapes (shown in colors) gradually uniformly converges to its theoretical equilibrium shape (``facet + smooth curve'' shape) when both the parameters decrease from $\varepsilon=\delta=0.2$ to $\varepsilon=\delta=0.05$.

\section{Conclusions}

We propose a parametric finite element method (PFEM) for solving sharp-interface models about solid-state dewetting of thin films with isotropic/weakly or strongly anisotropic surface energies and non-smooth surface energies. Compared to the widely studied surface diffusion flow for a closed curve evolution, solid-state dewetting can be modeled as the interface-tracking problem where morphology evolution is governed by surface diffusion and contact line migration. To some extent, this problem can be viewed as a new type of geometric evolution PDEs, i.e., surface diffusion flow for a new type of open curve evolution, and the boundary conditions which govern the contact line migration are also crucial to this problem.
Then we performed extensive numerical simulations for solid-state dewetting under the two cases: weakly anisotropic and
strongly anisotropic. Various convergence tests are performed for the proposed PFEM, and we also examined the evolution
of small islands on a flat substrate with different physical parameters, and the evolution of large islands on a flat substrate, where pinch-off events occur. Many interesting phenomena and complexities associated with
solid-state dewetting experiments are also presented by numerical simulations. Although the present PFEM is mainly focused on two-dimensions, some recent papers~\cite{Barrett10Euro,Barrett08JCP} have shed some light on how to extend PFEM to three-dimensions for solving similar type of problems, and our future studies will consider to extend the PFEM to simulating solid-state dewetting in three-dimensions, i.e., simulating a moving open surface under surface diffusion flow coupled with contact line migration.

\section*{Acknowledgement}

The authors would like to thank the anonymous referees for helpful comments.
Part of the work was done when the authors were visiting Beijing Computational Science
Research Center in 2015. We acknowledge the supports from the National Natural Science
Foundation of China Nos. 11401446 and 11571354 (W.~Jiang), the Fundamental Research Funds for
the Central Universities (Wuhan University, Grant No. 2042014kf0014) (W.~Jiang)
and the Ministry of Education of Singapore grant
R-146-000-223-112 (MOE2015-T2-2-146) (W.~Bao).

\section*{References}


\begin{thebibliography}{s99}
{
  \bibitem{Thompson12}
  C.V.~Thompson, Solid state dewetting of thin films, Annu. Rev. Mater. Res. 42 (2012) 399-434.

  \bibitem{Jiran90}
  E.~Jiran, C.V.~Thompson, Capillary instabilities in thin films, J. Electron. Mater. 19 (1990) 1153-1160.

  \bibitem{Jiran92}
  E.~Jiran, C.V.~Thompson, Capillary instabilities in thin continuous films, Thin Solid Films 208 (1992) 23-28.

  \bibitem{Ye10a}
  J.~Ye, C.V.~Thompson, Mechanisms of complex morphological evolution during solid-state dewetting of single-crystal
  nickel thin films, Appl. Phys. Lett. 97 (2010) 071904.

  \bibitem{Ye10b}
  J.~Ye, C.V.~Thompson, Regular pattern formation through the retraction and pinch-off of edges during solid-state
  dewetting of patterned single crystal films, Phys. Rev. B 82 (2010) 193408.

  \bibitem{Ye11a}
  J.~Ye, C.V.~Thompson, Anisotropic edge retraction and hole growth during solid-state dewetting of single-crystal
  nickel thin films, Acta Mater. 59 (2011) 582-589.

  \bibitem{Ye11b}
  J.~Ye, C.V.~Thompson, Templated solid-state dewetting to controllably produce complex patterns, Adv. Mater.
  23 (2011) 1567-1571.

  \bibitem{Leroy12}
  F.~Leroy, F.~Cheynis, T.~Passanante, P.~Muller, Dynamics, anisotropy and stability of silicon-on-insulator
  dewetting fronts, Phys. Rev. B 85 (2012) 195414.

  \bibitem{Rabkin14}
  E.~Rabkin, D.~Amram, E.~Alster, Solid state dewetting and stress relaxation in a thin single crystalline
  Ni film on sapphire, Acta Mater. 74 (2014) 30-38.

  \bibitem{Rath07}
  S.~Rath, M.~Heilig, H.~Port, J.~Wrachtrup, Periodic organic nanodot patterns for optical memory,
  Nano Lett. 7 (2007) 3845-3848.

  \bibitem{Armelao06}
  L.~Armelao, D.~Barreca, G.~Bottaro, A.~Gasparotto, S.~Gross, et al., Recent trends on nanocomposites
  based on Cu, Ag and Au clusters: a closer look, Coord. Cbem. Rev. 250 (2006) 1294-1314.

  \bibitem{Randolph07}
  S.J.~Randolph, J.D.~Fowlkes, A.V.~Melechko, K.L.~Klein, et al., Controlling thin film structure for
  the dewetting of catalyst nanoparticle arrays for subsequent carbon nanofiber growth, Nanotechnology
  18 (2007) 465354.

  \bibitem{Schmidt09}
  V.~Schmidt, J.V.~Wittemann, S.~Senz, U.~Gosele, Silicon nanowires: a review on aspects of their growth
  and their electrical properties, Adv. Mater. 21 (2009) 2681-2702.

  \bibitem{deGennes85}
  P.d.~Gennes, Wetting: statics and dynamics, Rev. Mod. Phys. 57 (1985) 827-863.

  \bibitem{Ren10}
  W.~Ren, D.~Hu, W.~E, Continuum models for the contact line problem, Phys. Fluids 22 (2010) 102103.

  \bibitem{Xu11}
  X.~Xu, X.P.~Wang, Analysis of wetting and contact angle hysteresis on chemically patterned surfaces,
  SIAM J. Appl. Math. 71 (2011) 1753-1779.

  \bibitem{Srolovitz86}
  D.J.~Srolovitz, S.A.~Safran, Capillary instability in thin films. II. kinetics, J. Appl. Phys.
  60 (1986) 255-260.

  \bibitem{Wong00}
  H.~Wong, P.W.~Voorhees, M.J.~Miksis, S.H.~Davis, Period mass shedding of a retracting solid
  film step, Acta Mater. 48 (2000) 1719-1728.

  \bibitem{Du10}
  P.~Du, M.~Khenner, H.~Wang, A tangent-plane marker-particle method for the computation of
  three-dimensional solid surfaces evolving by surface diffusion on a substrate,
  J. Comput. Phys. 229 (2010) 813-827.

  \bibitem{Dornel06}
  E.~Dornel, J.C.~Barbe, F.~deCr¨¦cy, G.~Lacolle, J.~Eymery, Surface diffusion dewetting of thin solid films: Numerical method and application to Si/SiO$_2$, Phys. Rev. B 73 (2006) 115427.

  \bibitem{Jiang12}
  W.~Jiang, W.~Bao, C.V.~Thompson, D.J.~Srolovitz, Phase field approach for simulating solid-state dewetting
  problems, Acta Mater. 60 (2012) 5578-5592.

  \bibitem{Jiang15a}
  Y.~Wang, W.~Jiang, W.~Bao, D.J.~Srolovitz, Sharp-interface model for solid-state dewetting
  problems with weakly anisotropic surface energies, Phys. Rev. B 91 (2015) 045303.

  \bibitem{Jiang15b}
  W.~Jiang, Y.~Wang, Q.~Zhao, D.J.~Srolovitz, W.~Bao, Solid-state dewetting and island morphologies
  in strongly anisotropic materials, Scripta Mater. 115 (2016) 123-127.

  \bibitem{Jiang15c}
  W.~Bao, W.~Jiang, D.J.~Srolovitz, Y.~Wang, Stable equilibrium shapes and the generalized
  Winterbottom construction for solid-state dewetting with anisotropic surface energies, preprint (2016).

  \bibitem{Dziwnik15}
  M.~Dziwnik, A.~Munch, B.~Wagner, Sharp-interface limits of an anisotropic phase field model for
  solid-state dewetting, IFAC-PapersOnLine 48-1 (2015) 394-395.

  \bibitem{Mullins57}
  W.W.~Mullins, Theory of thermal grooving, J. Appl. Phys. 28 (1957) 333-339.

  \bibitem{Cahn94}
  J.W.~Cahn, J.E.~Taylor, Surface motion by surface diffusion, Acta Metall. Mater. 42 (1994) 1045-1063.

  \bibitem{Gurtin02}
  M.E.~Gurtin, M.E.~Jabbour, Interface evolution in three dimensions with curvature-dependent energy and surface diffusion: interface-controlled evolution, phase transitions, epitaxial growth of elastic films, Arch. Rat. Mech. Anal. 163 (2002) 171-208.

  \bibitem{Li09}
  B.~Li, J.~Lowengrub, A.~Ratz, A.~Voigt, Geometric evolution laws for thin crystalline films: modeling and numerics,
  Commun. Comput. Phys. 6 (2009) 433-482.

  \bibitem{Torabi10}
  S.~Torabi, J.~Lowengrub, A.~Voigt, S.~Wise, A new phase-field model for strongly anisotropic systems,
  Proc. R. Soc. A 465 (2010) 1337-1359.

  \bibitem{Spencer04}
  B.J.~Spencer, Asymptotic solutions for the equilibrium crystal shape with small corner energy regularization,
  Phys. Rev. E 69 (2004) 011603.

  \bibitem{Leung11}
  S.~Leung, J.~Lowengrub, H.K.~Zhao, A grid based particle method for solving partial differential equations on evolving surfaces and modeling high order geometrical motion, J. Comput. Phys. 230 (2011) 2540-2561.

  \bibitem{Hon14}
  S.Y.~Hon, S.~Leung, H.K.~Zhao, A cell based particle method for modeling dynamic interfaces,
  J. Comput. Phys. 272 (2014) 279-306.

  \bibitem{Nochetto04}
  E.~Bansch, P.~Morin, R.H.~Nochetto, Surface diffusion of graphs: Variational formulation, error
  analysis and simulation, SIAM J. Numer. Anal. 42 (2004) 773-799.

  \bibitem{Elliott05a}
  K.~Deckelnick, G.~Dziuk, C.M.~Elliott, Computation of geometric partial differential equations
  and mean curvature flow, Acta Numer. 14 (2005) 139-232.

  \bibitem{Elliott05b}
  K.~Deckelnick, G.~Dziuk, C.M.~Elliott, Fully discrete finite element approximation for
  anisotropic surface diffusion of graphs, SIAM J. Numer. Anal. 43 (2005) 1112-1138.

  \bibitem{Nochetto05}
  E.~Bansch, P.~Morin, R.H.~Nochetto, A finite element method for surface diffusion: the
  parametric case, J. Comput. Phys. 203 (2005) 321-343.

  \bibitem{Barrett07a}
  J.W.~Barrett, H.~Garcke, R.~N\"urnberg, A parametric finite element method for fourth order
  geometric evolution equations, J. Comput. Phys. 222 (2007) 441-467.

  \bibitem{Barrett07b}
  J.W.~Barrett, H.~Garcke, R.~N\"urnberg, On the variational approximation of combined second
  and fourth order geometric evolution equations, SIAM J. Sci. Comput. 29 (2007) 1006-1041.

  \bibitem{Barrett08IMA}
  J.W.~Barrett, H.~Garcke, R.~N\"urnberg, Numerical approximation of anisotropic geometric evolution equations
  in the plane, IMA J. Numer. Anal. 28 (2008) 292-330.

  \bibitem{Barrett10Euro}
  J.W.~Barrett, H.~Garcke, R.~N\"urnberg, Finite element approximation of coupled surface and grain boundary motion
  with applications to thermal grooving and sintering, European J. Appl. Math. 21 (2010) 519-556.

  \bibitem{Chopp99}
  D.L.~Chopp, J.A.~Sethian, Motion by intrinsic Laplacian of curvature, Interfaces Free Boundaries
  1 (1999) 107-123.

  \bibitem{Smereka03}
  P. Smereka, Semi-implicit level set methods for curvature and surface diffusion motion,
  J. Sci. Comp. 19 (2003) 439-456.

  \bibitem{Kolahdouz13}
  E.M.~Kolahdouz, D.~Salac, A semi-implicit gradient augmented level set method, SIAM J. Sci. Comput.
  35 (2013) A231-A254.

  \bibitem{Barrett99a}
  J.W.~Barrett, J.F.~Blowey, H.~Garcke, Finite element approximation of the Cahn-Hilliard equation with degenerate
  mobility, SIAM J. Numer. Anal. 37 (1999) 286-318.

  \bibitem{Barrett99b}
  J.W.~Barrett, J.F.~Blowey, Finite element approximation of the Cahn-Hilliard equation with concentration dependent
  mobility, Math. Comp. 68 (1999) 487-517.

  \bibitem{Barrett01}
  J.W.~Barrett, J.F.~Blowey, Finite element approximation of a degenerate Allen-Cahn/Cahn-Hilliard system,
  SIAM J. Numer. Anal. 39 (2001) 1598-1624.

  \bibitem{Barrett13ZAMM}
  J.W.~Barrett, H.~Garcke, R.~N\"urnberg, On the stable discretization of strongly anisotropic phase field models
  with applications to crystal growth, ZAMM Z. Angew. Math. Mech. 93 (2013) 719-732.

  \bibitem{Wise07}
  S.~Wise, J.~Kim, J.~Lowengrub, Solving the regularized, strongly anisotropic Cahn-Hilliard equation by
  an adaptive nonlinear multigrid method, J. Comput. Phys. 226 (2007) 414-446.

  \bibitem{Suli15}
  A.A.~Lee, A.~Munch, E.~Suli, Degenerate mobilities in phase field models are insufficient to capture surface diffusion,  Appl. Phys. Lett. 107 (2015) 081603.

  \bibitem{Peng98}
  D.P.~Peng, S.~Osher, B.~Merriman, H.K.~Zhao, The geometry of Wulff crystal shapes and its relations with Riemann problems, Nonlinear Partial Differential Equations: Evanston, IL, 1998: 251-303.

  \bibitem{Wulff1901}
  G.~Wulff, Zur frage der geschwindigkeit des wachstums und der auflosung der krystallflachen,
  Z. Kristallogr., 34 (1901) 449-530.

  \bibitem{Winterbottom67}
  W.L.~Winterbottom, Equilibrium shape of a small particle in contact with a foreign substrate,
  Acta Metall. Mater. 15 (1967) 303-310.

  \bibitem{Zucker12}
  R.V.~Zucker, D.~Chatain, U.~Dahmen, S.~Hagege, W.C.~Craig, New software tools for the calculation and display of isolated and attached interfacial-energy minimizing particle shapes, J. Mater. Sci. 47 (2012) 8290-8302.

  \bibitem{Zucker13}
  R.V.~Zucker, G.H.~Kim, W.C.~Carter, C.V.~Thompson, A model for solid-state dewetting of a fully-faceted thin film,
  C.R. Physique 14 (2013) 564-577.

  \bibitem{Bussman11}
  E.~Bussman, F.~Cheynis, F.~Leroy, P.~Muller, O.~Pierre-Louis, Dynamics of solid thin-film dewetting in the silicon-on-insulator system, New J. Phys. 13 (2011) 043017.

  \bibitem{Barrett08JCP}
  J.W.~Barrett, H.~Garcke, R.~N\"urnberg, On the parametric finite element approximation of evolving hypersurfaces in $\mathbb{R}^3$, J. Comput. Phys. 227 (2008) 4281-4307.

}
\end{thebibliography}
\end{document}